\shorttitle{Multivariate Hermite interpolation of manifold-valued data}
\author{
Ralf Zimmermann%
\thanks{Department of Mathematics and Computer Science,%
University of Southern Denmark, Odense, Denmark
(\email{zimmermann@imadasdu.dk}, \url{https://portal.findresearcher.sdu.dk/en/persons/zimmermann} \orcid{0000-0003-1692-3996});%
}
\and
Ronny Bergmann%
\thanks{Norwegian University of Science and Technology, Department of Mathematical Sciences, Trondheim, Norway (\email{ronny.bergmannn@ntnu.no}, \url{https://www.ntnu.edu/employees/ronny.bergmann}, \orcid{0000-0001-8342-7218}).%
}
}
\title{Multivariate Hermite interpolation of manifold-valued data}
\begin{document}
\maketitle

\begin{abstract}
In this paper, we propose two methods for multivariate Hermite interpolation of mani\-fold-valued functions. 
On the one hand, we approach the problem via computing suitable weighted Riemannian barycenters.
To satisfy the conditions for Hermite interpolation, the sampled derivative information is converted into a condition on the derivatives of the associated weight functions. It turns out that this requires the solution of linear systems of equations, but no vector transport is necessary.
This approach treats all given sample data points equally and is intrinsic in the sense that it does not depend on local coordinates or embeddings.\\
As an alternative, we consider Hermite interpolation in a tangent space. This is a straightforward approach, where one designated point, for example one of the sample points or (one of) their center(s) of mass, is chosen to act as the base point at which the tangent space is attached. The remaining sampled locations and sampled derivatives are mapped to said tangent space.
This requires a vector transport between different tangent spaces.
The actual interpolation is then conducted via classical vector space operations. The interpolant depends on the selected base point.\\
The validity and performance of both approaches is illustrated by means of numerical examples.
\end{abstract}   

\begin{keywords}
Hermite interpolation, algorithms on manifolds, Riemannian center of mass, 
barycentric interpolation, Karcher mean
\end{keywords}
\begin{AMS}
\href{https://mathscinet.ams.org/msc/msc2010.html?t=65D05}{65D05}
\href{https://mathscinet.ams.org/msc/msc2010.html?t=65D15}{65D15}
\href{https://mathscinet.ams.org/msc/msc2010.html?t=49Q99}{49Q99}
\href{https://mathscinet.ams.org/msc/msc2010.html?t=41A29}{41A29}
\href{https://mathscinet.ams.org/msc/msc2010.html?t=53B50}{53B50}
\end{AMS}

\section{Introduction}

In this paper, we address multivariate Hermite interpolation of a function that takes values on a complete Riemannian manifold $\mcM$
with tangent bundle $T\mcM$. More precisely, let $D\subset \R^d$ be a parameter domain and consider a differentiable function 
\begin{equation*}
 f\colon  D \to \mcM, \quad \omega\mapsto f(\omega).
\end{equation*}
Consider a sample data set consisting of $k$ parameter locations $\omega_1,\ldots, \omega_k\in D$ with corresponding function values (manifold locations) 
and partial derivatives (tangent vectors)
\begin{equation*}
 p_j = f(\omega_j)\in\mcM,
\quad 
	v_j^i\coloneqq \partial_i f(\omega_j) = \deriv\Big\vert_{t=0} f(\omega_j + te_i)
	= \diff f(\omega_j)[e_i] \in T_{f(\omega_j)}\mcM,
\end{equation*}
where $e_1,\ldots,e_d$ denotes the orthonormal basis of unit vectors in $\R^d$.
The Hermite manifold interpolation problem is formalized as follows:
\\
Find a differentiable, manifold-valued function $\hat f\colon D\to \mcM$ such that
\begin{subequations}
    \begin{align}
        \hat f(\omega_j) &= p_j\in \mcM,  \qquad j =1,\ldots,k
        \label{eq:basicHermiteMnf1}
    \\
    \partial_i \hat f(\omega_j) &= v_j^i\in T_{p_j}\mcM,  \qquad j=1,\ldots,k; \hspace{0.2cm}  i=1,\ldots,d.
       \label{eq:basicHermiteMnf2}
    \end{align}
\end{subequations}
\subsection{Original contributions}
We develop two approaches to tackle the multivariate manifold Hermite interpolation problem \eqref{eq:basicHermiteMnf1}, \eqref{eq:basicHermiteMnf2}.
The first one is via computing weighted Riemannian barycenters.
This can be considered as an extension of the interpolation approach  developed in \cite{Grohs:2015} and in \cite{Sander:2012, Sander:2016}
by incorporating derivative data. We refer to this as barycentric Hermite interpolation \textbf{(BHI)}.
The BHI method has the following main features:
\begin{enumerate}[(i)]
 \item The approach works on arbitrary Riemannian manifolds, i.e., no special structure
 (Lie Group, homogeneous space, symmetric space,...) is required.
 In order to conduct practical computations, only an algorithm for evaluating the Riemannian logarithm map must be available.
 All occurrences of Riemannian exponentials can be replaced with retractions and analogously any Riemannian logarithms by inverse retractions \cite[Section 4.1]{AbsilMahonySepulchre:2008}.
For clarity of presentation we just write exponential and logarithmic map, respectively, during this paper.
 \item The differentiability order of the interpolant is the same as that of the weight functions. Hence, when working with smooth weight functions, a smooth interpolant is produced.
 \item Although our theoretical derivation involves covariant derivatives and the Hessian forms of scalar fields on manifolds, the practical implementation of BHI does not require computing any such operators.
 \item The method works only locally, on a domain, where the Riemannian center of mass exists and is unique. The number of sample points must exceed the dimension of the manifold.
\end{enumerate}
%
The second approach is a straightforward translation of Hermite interpolation in Euclidean vector spaces to a selected tangent space of the manifold under consideration. 
We refer to this approach as tangent space Hermite interpolation \textbf{(THI)}.
The THI method has the following main features:
\begin{enumerate}[(i)]
 \item The approach works on arbitrary Riemannian manifolds, i.e., no special structure
 (Lie Group, homogeneous space, symmetric space,...) is required.
 In order to conduct practical computations, algorithms for evaluating the Riemannian exponential map and the Riemannian logarithm map must be available (or must be consistently replaced with invertible retractions).
 The differential of the Riemannian logarithm must be evaluated (or approximated). 
 \item The differentiablility order of the interpolant is the same as that of the weight functions.
 \item The method works only locally, on a star-shaped domain around a manifold location $p_c\in \mcM$, where the Riemannian normal coordinates are well-defined.
 The interpolant is constructed in the tangent space $T_{p_c}\mcM$ and thus depends on the choice of the center point $p_c$.
\end{enumerate}
We illustrate the validity and the performance of both approaches by means of numerical examples.
\subsection{Related work}
In the research literature, there are two different tracks of manifold interpolation research.
On the one hand, there is the problem of interpolating scalar functions with manifold-valued \emph{inputs} $f\colon\mcM \to \R$.
This line of research is followed, e.g., in \cite{Allasia:2018, Narcowich:1995}, but is not considered here.
On the other hand, interpolating parametric functions with manifold-valued \emph{outputs} $f\colon\R^d \to \mcM$ is investigated.
The work at hand subordinates to this setting.
Confusion may be caused by the fact that for both types of problems, one speaks of “interpolation \emph{on} a manifold”.\\
To the best of our knowledge, \emph{multivariate} Hermite interpolation for output data 
on general manifolds has not yet been considered in the research literature.
Univariate Hermite interpolation has been considered explicitly in \cite{Jakubiak:2006}
for data on compact, connected Lie groups with a bi-invariant metric.
A general approach to Hermite curve interpolation is featured in \cite{Zimmermann:2020}.\\
A related line of research is the generalization of Bézier curves and the De Casteljau-algorithm
to Riemannian manifolds, see \cite{BergmannGousenbourger:2018,GouseMassartAbsil:2018, Polthier:2013,  Noakes:2007, SAMIR:2019}.
The transition of this technique to manifolds is via replacing the inherent straight lines with geodesics.
The start and end velocities of the resulting spline are proportional to the velocity vectors of 
the geodesics that connect the first two and the last two control points, respectively (\cite[Theorem 1]{Noakes:2007}).
Hence, the method can be adapted to work for univariate Hermite interpolation.
More general subdivision schemes based on similar geodesic averages have been investigated in \cite{Dyn:2017, Wallner:2005}.
The preprint \cite{BenZion:2022} addresses univariate Hermite interpolation on the sphere via subdivision schemes.
Two-variate manifold interpolation via Bézier surfaces with $C^1$-connection of local patches has been investigated in \cite{AbsGouseWirth:2016}.\\
A multivariate Hermite-type method that is specifically tailored to interpolation problems on the Grassmann manifold is sketched in \cite[\S 3.7.4]{Amsallem:2010}.\\
Interpolation with Riemannian barycenters has been proposed in the context of geo\-desic finite elements in 
\cite{Grohs:2015} and in \cite{Sander:2012, Sander:2016}.\\
An overview over manifold interpolation methods with a special focus on model reduction applications is given in \cite{Zimmermann:2021}.
\subsection{Notational conventions}
Throughout, we assume that $\mcM$ is a complete Riemannian manifold of dimension $\dim(\mcM)= m$.
The tangent space of $\mcM$ at $p\in \mcM$ is $T_p\mcM$ and the tangent bundle is
$T\mcM= \cup_{p\in\mcM}T_p\mcM$.
As a Riemannian manifold, $\mcM$ carries a family of inner products 
$\left\{\langle \cdot,\cdot \rangle_{p}\colon T_p\mcM\times T_p\mcM\to \R, (v,w)\mapsto \langle v,w \rangle_{p}\right\}_{p\in\mcM}$
that depends smoothly on the base point $p\in\mcM$.\\
The differential of a function $f\colon \R^d\to \R^m$ at a point $p$ is denoted by 
$\Diff{}f(p)\colon \R^d \to \R^m$. For a two-arguments function $L\colon\R^m\times \R^d\to \R^m, (q,\omega) \mapsto L(q,\omega)$,
the linear maps
$\Diff_q L(q,\omega)\colon \R^m\to \R^m$ and $\Diff_\omega L(q,\omega)\colon \R^d\to \R^m$ denote the differentials of
$q\mapsto L(q,\omega)$ (with $\omega$ considered as fixed) and
$\omega \mapsto L(q,\omega)$ (with $q$ considered as fixed), respectively. 
The differential at a point $p$ of a function $f\colon\mcM \to \mcN$ between differentiable manifolds $\mcM,\mcN$ is denoted by
$\diff{}f(p)\colon T_p\mcM \to T_{f(p)}\mcN$.
For $L\colon\mcM\times \R^d\to \mcM, (q, \omega) \mapsto L(q,\omega)$, the notations $\diff{}_qL(q,\omega)$ and $\diff{}_\omega L(q,\omega)$ are to be understood in analogy to the above. 
\\
A vector field is a smooth function $X\colon\mcM\to T\mcM$ such that $X(p)\in T_p\mcM$ for all $p\in\mcM$.
The set of smooth vector fields on $\mcM$ is denoted by $\mathcal{X}(\mcM)$.
We use the symbol $\nabla$ to denote the (unique) Levi-Civita connection on $\mcM$. For $X,Y\in\mathcal{X}(\mcM)$, the notation $\nabla_XY\in \mathcal{X}(\mcM)$ denotes the covariant derivative of the vector field $Y$ in the direction of the vector field $X$. A vector field along a curve $c\colon I\to \mcM$ is a mapping 
$X_c\colon I \to T\mcM$ such that $X_c(t) \in T_{c(t)}\mcM$. The covariant derivative of $X_c$ along $c$ is $\covDeriv[t]X_c$.
If there is an ambient vector field $X\in\mathcal{X}(\mcM)$ such that $X(c(t)) = X_c(t)$, then $\covDeriv[t]X_c = \nabla_{\dot c(t)} X$.
For a scalar function $g\colon\mcM\to \R$, the gradient vector field $\grad g\in \mathcal{X}(\mcM)$ is point-wise defined by
$\langle \grad g(p), v\rangle_p = \diff{}g(p)[v]$ for all $v\in T_p\mcM$.
The covariant derivative of the gradient field yields the Hessian
$\Hess g [X] = \nabla_X \grad g$
and gives rise to the endomorphism
$T_p\mcM \ni v\mapsto \Hess g(p)[v] = (\nabla_v \grad g)(p)\in T_p\mcM$.\\
For a scalar two-parameter function $L\colon\mcM\times \R^d, (q,\omega)\mapsto L(q,\omega)\in\R$, we will write $\grad_q L(p,\omega)$ for the gradient by $q$ evaluated at $(p,\omega)$, which is defined by
$
\langle \grad_q L(p,\omega), v\rangle_p = \diff{}_q L(p,\omega)[v]$, for all $v\in T_p\mcM$.
Likewise, $\Hess_q L(p,\omega)[v]$ denotes the Hessian of $L(q,\omega)$ by $q$ evaluated at $(p,\omega)$ applied to the tangent vector $v\in T_p\mcM$.
\subsection{Organization of the paper}
Section \ref{sec:RiemannBary} recaps the barycentric interpolation methods. In Section \ref{sec:baryHermite}, the approach is extended to the Hermite setting.
The alternative approach of tangent space Hermite interpolation is outlined in Section \ref{sec:THI}. 
Numerical experiments are featured in Section \ref{sec:numex}, and Section \ref{sec:conclusions} concludes the paper.
%
\section{Interpolation with weighted Riemannian barycenters}
\label{sec:RiemannBary}
The Riemannian barycenter or Riemannian center of mass\footnote{Here, we introduce Riemannian barycenter for discrete data sets; for centers w.r.t.\ a general mass distribution, see Karcher's original paper \cite{Karcher:1977}, Section 1.} or Fr{\'e}chet mean of a sample data set
$\{p_1,\ldots,p_k\}\subset \mcM$ on a manifold
is defined as the minimizer of the Riemannian objective function
\begin{equation*}
 \mcM\ni q \mapsto L(q) = \frac{1}{2} \sum_{j=1}^k w_j \dist(q, p_j)^2,
\end{equation*}
where $\dist(q,p_j)$ is the Riemannian distance between the manifold locations $q$ and $p_j$ and $w_j\geq 0$ are scalar weights such that $\sum_{j=1}^k w_j = 1$.
Formally, the latter requirements mean that the center of mass is taken with respect to a discrete positive measure of unit weight.
This definition generalizes the notion of the barycentric mean in Euclidean spaces, cf. \Cref{app:EuclideanCase} of the supplements.
However, on curved manifolds, the global center might not be unique. Moreover,
local minimizers may appear. For more details, see \cite{Karcher:1977} and \cite{AfsariTronVidal:2013}, which also give uniqueness criteria.

\subsection{Interpolation via optimization}
Interpolation can be performed by computing weighted Riemannian centers.
More precisely, let $f\colon\R^d\supset D \to \mcM$ and
let $\omega_1,\ldots,\omega_k\subset D$ be a set of parameter locations and let 
$p_j=f(\omega_j)\in \mcM$, $j=1,\ldots,k$ be the corresponding sampled manifold locations on $\mcM$.
The interpolant is then defined on the convex hull $\text{conv}\{\omega_1,\ldots,\omega_k\}\subset D\subset \R^d$ of the samples.\\
Let $\{\varphi_j:\omega\mapsto \varphi_j(\omega)\in \R\mid j=1,\ldots,k\}$ be a suitable set of multivariate, scalar-valued interpolation weight functions with $\varphi_l(\omega_j) = \delta_{lj}$ and $ \sum_{j=1}^k \varphi_j(\omega) \equiv 1$.
Such weight functions can be constructed, e.g., as Lagrangians, \cite{Sander:2016},  or radial basis functions, \cite{Buhmann:2003}.
The interpolant $q^* \approx f(\omega^*)\in \mcM$ at an unsampled parameter 
location $\omega^*\in \text{conv}\{\omega_1,\ldots,\omega_k\}$ can be taken to be the minimizer
\begin{equation}
 \label{eq:baryInterp}
 \omega^* \mapsto f(\omega^*) = q^* \coloneqq \argmin_{q\in \mcM} L(q, \omega^*),
 \quad \text{where } L(q, \omega) \coloneqq \frac12 \sum_{j=1}^k \varphi_j(\omega) \dist(q,p_j)^2.
\end{equation}
Since the weight functions may attain negative values, this corresponds to taking the center of mass with respect to a discrete {\em signed} measure of unit weight, see the discussion in \cite[Section 3]{Sander:2016}.
Computing $q^*$ thus requires one to solve a Riemannian optimization problem.
At sample location $\omega_l$, one has indeed that
\begin{equation*}
2L(q, \omega_l) = 
\sum_{j=1}^k \varphi_j(\omega_l) \dist(q,p_j)^2
=\sum_{j=1}^k \delta_{lj} \dist(q,p_j)^2= \dist(q,p_l)^2,
\end{equation*}
which has the unique global minimum at $q^* = p_l$.
Hence, the function $\hat f\colon \omega \mapsto \argmin_{q\in \mcM} L(q, \omega)$ satisfies the basic interpolation conditions.
\begin{remark}
Under certain conditions that locally ensure the existence and uniqueness of Riemannian barycenters,
the minima of \eqref{eq:baryInterp} are exactly at the zeros of the associated gradient.
Moreover, they depend smoothly on the parameters $(q, \omega)$ if the weight functions $\varphi_j$ are smooth, 
see \cite[Theorems 3.19 \& 4.1]{Sander:2016}.
As a consequence, the interpolant is smooth under these conditions.
\end{remark}
\subsection{The gradient and Hessian of the Riemannian distance function}
Interpolation via barycenters is an optimization task.
As a rule, numerical optimization requires the computation of the gradient of the objective function. 
The next theorem, due to Karcher, provides the gradient of the squared  
Riemannian distance function.
\begin{theorem}[{\cite[Thm. 1.2]{Karcher:1977}}]
\label{thm:grad_dist}
Let $\mcM$ be a complete Riemannian manifold and let $p\in \mcM$.
Let $B_\rho(p)$ be a geodesic ball of radius $\rho$ around $p$ such that the geodesics between any two points inside $B_\rho(p)$ are unique and minimizing.
Define 
\begin{equation*}
	L_p\colon B_\rho(p)\to \R, \quad q \mapsto \frac12 \dist(q,p)^2.
\end{equation*}
Then
\begin{equation*}
	\grad L_p(q) = -\Log_q(p),
\end{equation*}
where $\Log_q =  (\Exp_q)^{-1}$ is the Riemannian logarithm map.
\end{theorem}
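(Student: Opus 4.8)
The plan is to reduce the statement to the first variation of arc length / energy along geodesics. First I would fix $q \in B_\rho(p)$ and, using the hypothesis on $B_\rho(p)$, note that there is a unique minimizing geodesic $\gamma\colon[0,1]\to\mcM$ with $\gamma(0)=q$, $\gamma(1)=p$, and that $\dist(q,p) = \|\dot\gamma(0)\|_q = \|\Log_q(p)\|_q$, with $\Log_q(p) = \dot\gamma(0)$. To compute $\grad L_p(q)$ it suffices, by the defining relation $\langle \grad L_p(q), v\rangle_q = \diff L_p(q)[v]$, to differentiate $q \mapsto \tfrac12\dist(q,p)^2$ in an arbitrary tangent direction $v\in T_q\mcM$.

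Next I would set up a variation: pick a curve $c\colon(-\varepsilon,\varepsilon)\to B_\rho(p)$ with $c(0)=q$, $\dot c(0)=v$, and for each $s$ let $\gamma_s\colon[0,1]\to\mcM$ be the unique minimizing geodesic from $c(s)$ to $p$; this gives a smooth variation $\Gamma(s,t)=\gamma_s(t)$ with variation field $J(t) = \partial_s\Gamma(s,t)|_{s=0}$ along $\gamma_0=\gamma$, satisfying $J(0)=v$ and $J(1)=0$ (the endpoint $p$ is fixed). Define the energy $E(s) = \tfrac12\int_0^1 \|\partial_t\Gamma(s,t)\|^2\,dt$; since each $\gamma_s$ is a minimizing geodesic parametrized on $[0,1]$, one has $E(s) = \tfrac12\dist(c(s),p)^2 = L_p(c(s))$. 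Then I invoke the first variation formula for the energy functional: because each $\gamma_s$ is a geodesic the interior ("acceleration") term vanishes, and one is left with the boundary term
\begin{equation*}
 \diff L_p(q)[v] = E'(0) = \bigl\langle \partial_t\Gamma(0,t),\, J(t)\bigr\rangle\Big|_{t=0}^{t=1} = \langle \dot\gamma(1), J(1)\rangle - \langle \dot\gamma(0), J(0)\rangle = -\langle \dot\gamma(0), v\rangle_q,
\end{equation*}
using $J(1)=0$. Since $\dot\gamma(0) = \Log_q(p)$, comparing with $\langle \grad L_p(q), v\rangle_q = \diff L_p(q)[v]$ for all $v$ yields $\grad L_p(q) = -\Log_q(p)$.

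The main technical point — and the step I would be most careful about — is justifying that the construction $s\mapsto \gamma_s$ is smooth in $s$ and that the energy of the minimizing geodesic on $[0,1]$ genuinely equals $\tfrac12\dist^2$; both rest on the hypothesis that geodesics between points of $B_\rho(p)$ are unique and minimizing, which makes $\Log_q$ a diffeomorphism onto its image and lets us write $\gamma_s(t) = \Exp_{c(s)}(t\,\Log_{c(s)}(p))$, manifestly smooth in $(s,t)$. The identity $E = \tfrac12 \dist^2$ (rather than just $E \ge \tfrac12\dist^2$) uses that a constant-speed minimizing geodesic realizes equality in the Cauchy–Schwarz bound $\bigl(\int_0^1\|\partial_t\Gamma\|\,dt\bigr)^2 \le \int_0^1\|\partial_t\Gamma\|^2\,dt$. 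Everything else is the standard first-variation computation, which I would not grind through in detail.
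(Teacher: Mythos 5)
Your proposal is correct and follows essentially the same route as the paper: both arguments realize $\tfrac12\dist(\cdot,p)^2$ as the energy of the unique constant-speed minimizing geodesic joining the moving point to $p$ and apply the first variation of energy, with the interior term killed because the curves are geodesics and the boundary term at the fixed endpoint $p$ vanishing. The only (cosmetic) difference is that the paper parametrizes the geodesics outward from $p$ via $c_p(s,t)=\Exp_p(s\Log_p(\gamma(t)))$ and reads off the term at $s=1$, whereas you parametrize them from $c(s)$ toward $p$ and read off the term at $t=0$; the careful points you flag (smoothness of the family and $E=\tfrac12\dist^2$ rather than an inequality) are exactly the ones resting on the uniqueness/minimality hypothesis.
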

Because Karcher works with barycenters with respect to mass distributions and also to make this exposition self-contained, we recap the proof in \Cref{app:proofs}.\\
Eventually, we also need information on the Hessian of $L_p(q) = \frac12 \dist(q,p)^2$ at $q=p$. 
The Riemannian Hesse form of a scalar function $g\colon\mcM\to \R$ at $q$ is 
\begin{equation}
\label{eq:Hesse_def}
   \Hess g(q)\colon T_q\mcM\to T_q\mcM, \quad v\mapsto \Hess g(q)[v] = (\nabla_v \grad g)(q),
\end{equation}
see \cite[\S 6A]{Kuhnel:2015}.
\begin{remark} Actually, the Riemannian Hesse $(1,1)$-tensor maps vector fields to vector fields,
\begin{equation*}
  \Hess g\colon \mathcal{X}(\mcM) \to \mathcal{X}(\mcM),
  \quad X \mapsto \nabla_X \grad g.
\end{equation*}
Yet, since a tensor is a point-wise object, $(\nabla_v \grad g)(q) = (\nabla_X \grad g)(q)$ for all vector fields $X\in\mathcal{X}(\mcM)$ with $X(q) =v$.
Hence, it makes sense to consider the Hessian at $q$ as an endomorphism of $T_q\mcM$.
Additional background information on the Riemannian Hessian is given in  \Cref{app:proofs} of the supplements.
\end{remark}
\begin{theorem}
\label{thm:Hesse_id}
Consider the setting of Theorem \ref{thm:grad_dist}.
For $p$ fixed, the Hesse form of the function $q\mapsto L_p(q) = \frac12 \dist(q,p)^2$ at $p$ is the identity,
\begin{equation*}
    \Hess L_p(p)= \id_{T_p\mcM}\colon T_p\mcM \to T_p\mcM.
\end{equation*}
\end{theorem}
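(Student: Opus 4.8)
The plan is to differentiate the gradient identity of \Cref{thm:grad_dist}. By the definition \eqref{eq:Hesse_def} of the Riemannian Hesse form, for a tangent vector $v\in T_p\mcM$ we may pick \emph{any} curve $\gamma$ with $\gamma(0)=p$ and $\dot\gamma(0)=v$ and compute
\[
  \Hess L_p(p)[v] \;=\; \bigl(\nabla_v \grad L_p\bigr)(p)
  \;=\; \frac{D}{dt}\Big\vert_{t=0} \grad L_p(\gamma(t))
  \;=\; -\,\frac{D}{dt}\Big\vert_{t=0} \Log_{\gamma(t)}(p),
\]
where $\tfrac{D}{dt}$ is the covariant derivative along $\gamma$, the second equality is the defining property of $\nabla_v$, and the last one uses \Cref{thm:grad_dist} (valid for small $t$, since $\gamma(t)\in B_\rho(p)$ by continuity). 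Thus the whole statement reduces to showing that the covariant derivative of the vector field $t\mapsto \Log_{\gamma(t)}(p)$ along $\gamma$, at $t=0$, equals $-v$.

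For this I would run the standard geodesic-variation argument. For $t$ near $0$ let $c_t\colon[0,1]\to\mcM$ be the minimizing geodesic from $\gamma(t)$ to $p$, i.e. $c_t(s)=\Exp_{\gamma(t)}\!\bigl(s\,\Log_{\gamma(t)}(p)\bigr)$, which is well defined and smooth jointly in $(s,t)$ as long as $\gamma(t)\in B_\rho(p)$. Put $\Gamma(s,t)\coloneqq c_t(s)$, so $\partial_s\Gamma(0,t)=\dot c_t(0)=\Log_{\gamma(t)}(p)$ is exactly the vector field to be differentiated, while $\Gamma(0,t)=\gamma(t)$ and $\Gamma(1,t)\equiv p$. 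The key observation is that for $t=0$ the geodesic $c_0$ runs from $\gamma(0)=p$ to $p$, hence is the constant curve $c_0\equiv p$. Therefore the variation field $J(s)\coloneqq \partial_t\Gamma(s,0)$ is a Jacobi field along the trivial geodesic $c_0$; since $\dot c_0\equiv 0$, the curvature term in the Jacobi equation drops and it reduces to $\tfrac{D^2}{ds^2}J=0$. As parallel transport along a constant curve is trivial, $J$ is then affine in the fixed space $T_p\mcM$, with $J(0)=\dot\gamma(0)=v$ (from $\Gamma(0,t)=\gamma(t)$) and $J(1)=0$ (from $\Gamma(1,t)\equiv p$), so $J(s)=(1-s)v$ and $\tfrac{D}{ds}J(0)=-v$. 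Using the symmetry of the Levi-Civita connection, $\tfrac{D}{dt}\partial_s\Gamma=\tfrac{D}{ds}\partial_t\Gamma$, we obtain
\[
  \frac{D}{dt}\Big\vert_{t=0}\Log_{\gamma(t)}(p)
  \;=\; \frac{D}{dt}\Big\vert_{t=0}\partial_s\Gamma(0,t)
  \;=\; \frac{D}{ds}\Big\vert_{s=0}\partial_t\Gamma(s,0)
  \;=\; \frac{D}{ds}\Big\vert_{s=0}J(s)
  \;=\; -v,
\]
whence $\Hess L_p(p)[v]=-(-v)=v$, i.e. $\Hess L_p(p)=\id_{T_p\mcM}$.

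The main obstacle is bookkeeping rather than conceptual: one must verify that $\Gamma$ is a genuinely smooth two-parameter map, so that the mixed covariant derivatives and the Jacobi-field identification are legitimate, and one must keep track that $t\mapsto\Log_{\gamma(t)}(p)$ is a vector field \emph{along} $\gamma$, so that the object appearing in \eqref{eq:Hesse_def} is a covariant, not an ordinary, derivative. A shorter alternative sidesteps variations: in Riemannian normal coordinates centred at $p$ one has $\dist(q,p)^2=|x|^2$ and $\Gamma^k_{ij}(p)=0$, so the coordinate Hessian $\partial_i\partial_j L_p-\Gamma^k_{ij}\,\partial_k L_p$ equals $\delta_{ij}$ at $x=0$; this, however, merely repackages the same second-order information about $\Exp_p$ near the origin and additionally requires the identity $\dist(\cdot,p)^2=|x|^2$, so I would present the intrinsic Jacobi-field computation as the primary argument.
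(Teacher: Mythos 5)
Your argument is correct, and while it rests on the same underlying geometric picture as the paper's proof (a degenerate variation through geodesics based at $p$), the logical route is genuinely different. The paper computes the \emph{quadratic form} $\langle \Hess L_p(p)[v],v\rangle$ via Karcher's second-variation formula for the variation $c_p(s,t)=\Exp_p(s\Log_p(\gamma(t)))$ with fixed base point $p$: there the chain rule together with $\diff{}(\Exp_p)_0=\id_{T_p\mcM}=\diff{}(\Log_p)_p$ gives $\partial_t c_p(s,0)=sv$, the covariant derivative along the point curve reduces to an ordinary derivative, and the operator identity $\Hess L_p(p)[v]=v$ is then recovered by symmetry and polarization. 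You instead compute the \emph{operator} directly, as $\Hess L_p(p)[v]=(\nabla_v\grad L_p)(p)=-\covDeriv[t]\big\vert_{t=0}\Log_{\gamma(t)}(p)$, i.e.\ by covariantly differentiating the gradient formula of \Cref{thm:grad_dist}; your variation $\Gamma(s,t)=\Exp_{\gamma(t)}(s\Log_{\gamma(t)}(p))$ runs the geodesics in the opposite direction (from $\gamma(t)$ to $p$), so its base point varies, and you avoid having to differentiate $\Exp$ in the base point by identifying $J(s)=\partial_t\Gamma(s,0)$ as a Jacobi field along the constant geodesic, which the boundary conditions $J(0)=v$, $J(1)=0$ force to be $J(s)=(1-s)v$. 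What your route buys is that no polarization step is needed and the conclusion is obtained as a vector identity in one pass; what it costs is the (standard but non-trivial) facts that the variation field of a family of geodesics is a Jacobi field and that this remains valid for the degenerate member $c_0\equiv p$ — both of which hold here by joint smoothness of $\Gamma$ and the vanishing of the curvature term when $\dot c_0\equiv 0$, as you note. Your closing remark on normal coordinates is also a valid shortcut, with the caveats you already state.
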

\begin{proof}
We use Karcher's approach of computing the Hessian 
via a variation through geodesics, see \Cref{thm:Karcher1977} 
and \Cref{fig:GeoVar} in the supplement.
This shows that for a geodesic $t\mapsto \gamma(t)$ with $\gamma(0) = q$, $\dot \gamma(0) = v$
and the associated variation of geodesics
\begin{equation*}
	(s,t)\mapsto c_p(s,t) = \Exp_p(s\Log_p(\gamma(t))),
\end{equation*}
it holds
\begin{equation*}
 \langle \Hess L_p(q)[v]\ ,\ v\rangle =
	 \langle\covDeriv[s] \partial_t c_p(1,0)\ ,\ v \rangle.
\end{equation*}
In the special case, where the geodesic $\gamma$ starts from $q=p$ with velocity $v\in T_p\mcM$,
we obtain
\begin{align*}
    \partial_t\big\vert_{t=0} c_p(s,t) &= 
    \diff\left(\Exp_p\right)_{s\Log_p(\gamma(0))}
  \left(s \diff\left(\Log_p\right)_{\gamma(0)}(\dot \gamma (0))\right)
  \\
  &= \diff\left(\Exp_p\right)_{s\cdot 0}
  \left(s \diff\left(\Log_p\right)_{p}(v)\right)
  = sv \in T_p\mcM,
\end{align*}
because $\diff{}(\Exp_p)_0=\id_{T_p\mcM} = \diff{}(\Log_p)_p$.
Note that $s\mapsto \partial_t c_p(s,0)\in T_p\mcM$ is a vector field along the point curve
$\alpha\colon s\mapsto \alpha(s)\equiv p$.
Therefore, the covariant derivative coincides with the usual derivative and we obtain
\begin{equation*}
   \covDeriv[s] \partial_t c_p(s,0) = \deriv[s](sv) = v.
\end{equation*}
As a consequence,
\begin{equation*}
 \langle \Hess L_p(p)[v]\hspace{0.1cm},\hspace{0.1cm} v\rangle =
	 \langle v, v \rangle \quad \text{ for all } v\in T_p\mcM.
\end{equation*}
The Hessian is symmetric. Via polarization, it is uniquely determined by terms of the above form. This yields $\Hess L_p(p)[v] = v$.
\end{proof}

By \Cref{thm:grad_dist}, the gradient of the objective function $L$ in \eqref{eq:baryInterp} by $q$ is
\begin{equation}
 \label{eq:baryGrad}
 \grad_q L(q, \omega) = \sum_{j=1}^k \varphi_j(\omega)\cdot (-\Log_q(p_j)) \in T_q\mcM.
\end{equation}

At a sample location $p_l$, the gradient (by $q$) of the objective function $L(q,\omega)$ in the barycentric interpolation problem \eqref{eq:baryInterp} vanishes, because of $\Log_{p_l}(p_l) = 0\in T_{p_l}\mcM$. 
By \Cref{thm:Hesse_id}, the Hessian (again with respect to the $q$-argument) is
\begin{equation}
    \label{eq:Hesse_id_at_p}
    \Hess_{q} L(p_l, \omega_l)[v] = \sum_{j=1}^k \varphi_j(\omega_l) \Hess L_{p_j} (p_l)[v]
    =  \Hess L_{p_l} (p_l)[v] = v \in T_{p_l}\mcM,
\end{equation}
because $\varphi_j(\omega_l)=\delta_{lj}$.
Thus, the Hessian by $q$ at $(p_l, \omega_l)$ is the identity on $T_{p_l}\mcM$. In particular, it has full rank.
A generic gradient descent algorithm\footnote{
	There exist several different ways of computing the weighted Riemannian barycenter, see for example
	\url{https://juliamanifolds.github.io/Manifolds.jl/latest/features/statistics.html} for an overview.
} to compute the barycentric interpolant
for a function $f\colon\R^d \ni \omega \mapsto f(\omega) \in \mcM$ is given in \Cref{alg:BaryInt}

\begin{algorithm}{Interpolation via the weighted Riemannian barycenter.}
\begin{algorithmic}[1]
  \REQUIRE{Sample data set $\{p_1=f(\omega_1),\ldots, p_k=f(\omega_k)\}\subset \mcM$,
  unsampled parameter location $\omega^*\in \text{conv}(\omega_1,\ldots, \omega_{k})\subset \R^d$,
  initial guess $q_0$, convergence threshold $\tau$}
  \STATE{$k\coloneqq0$}
  \STATE{Compute $\grad_q L(q_k, \omega^*)$ according to \eqref{eq:baryGrad}}
  \WHILE{$\lVert\grad_q L(q_k, \omega^*)\rVert_{q}> \tau$}
  \STATE{select a step size $\alpha_k$}
  \STATE{$q_{k+1} \coloneqq \Exp^{\mcM}_{q_k}\left(-\alpha_k \grad_q L(q_k, \omega^*)\right)$}
  \STATE{$k\coloneqq k+1$}
  \ENDWHILE
  \ENSURE{$\hat f(\omega^*)\coloneqq q^*\coloneqq q_k\in \mcM$ interpolant of $f(\omega^*)$.}
\end{algorithmic}
\label{alg:BaryInt}
\end{algorithm}
%
%
%
%
\section{Barycentric Hermite interpolation}
\label{sec:baryHermite}
In this section, we enhance the me\-thod of weighted barycentric interpolation by including derivative information. The task is to construct an interpolant  of the form
\begin{equation*}
	\hat f\colon \omega \mapsto 	\hat f(\omega)= \argmin_{q\in \mcM} \frac12 \sum_{j=1}^k \varphi_j(\omega) \dist(q,p_j)^2 = \argmin_{q\in \mcM} L(q,\omega)
\end{equation*}
that satisfies the interpolation conditions \eqref{eq:basicHermiteMnf1}, \eqref{eq:basicHermiteMnf2}.
The requirement to meet the sampled derivatives $v_j^i=\partial_i f(\omega_j)$ entails conditions on the partial derivatives of the weight functions $\varphi_j(\omega)$.
We work under the general assumption that the interpolation procedure takes place on a domain, where the weighted Riemannian barycenters exist and are unique.
For a detailed analysis of interpolation via Riemannian barycenters, we refer to \cite{Sander:2012, Sander:2016} and \cite{Grohs:2015}.
\subsection{Tracking the barycenters via the implicit function theorem}
\label{sec:TechPrep}
Introduce the parametric gradient field
\begin{equation}
\label{eq:objectiveG_VF}
  G\colon\mcM \times \R^d \to T\mcM, \quad (q, \omega) \mapsto G(q,\omega) \coloneqq \grad_q L(q,\omega) = -\sum_{j=1}^k \varphi_j(\omega) \Log_q(p_j)
\end{equation}
and note that $G$ is a smooth vector field on the product manifold $\mcM \times \R^d$.
Suppose that $G$ vanishes at $(q^*, \omega^*)$.
Our strategy is to parameterize the zero-sets locally via the implicit function theorem.
By differentiating the corresponding implicit function, we will establish a relation between the derivatives of the interpolation weight functions $\omega\mapsto \varphi_i(\omega)$ of \eqref{eq:baryInterp} and the sampled derivatives \eqref{eq:basicHermiteMnf2}.\\
The implicit function theorem has a close relative, the inverse function theorem,
and both theorems rely on a full-rank condition for a certain differential.
While a manifold version of the inverse function theorem appears in many standard textbooks on differential geometry
(e.g., \cite[Thm 4.5]{Lee:2012}), we were not able to locate a textbook reference for a manifold counterpart to the implicit function theorem.
Yet, because both $\mcM$ and $T\mcM$ are differentiable manifolds, it is straightforward to transfer the classical implicit function theorem,
and often, it is simply taken for granted (as  in \cite[Thm. 2.2]{Sander:2012}, \cite[Thm 4.1.]{Sander:2016}). The interesting part is how the rank condition transforms. This is the contents of the following lemma.
\begin{lemma}
\label{lem:Hessefullrank4implicit}
Let $\mcM$ be a Riemannian manifold.
\begin{enumerate}
\item Let $G\colon\mcM \to T\mcM$ be a smooth vector field. 
At locations, where $G$ vanishes, the differential of $G$ coincides with the covariant derivative, i.e., 
at $q^*\in \mcM$ with $G(q^*) = 0$, it holds
\begin{equation*}
    \diff{}G(q^*)[v] = (\nabla_v G) (q^*), \quad\text{ for all } v\in T_{q^*}\mcM.
\end{equation*}
\item  In the special case, where $G$ is the gradient vector field $\grad g\colon\mcM \to T\mcM$ of a scalar function $g\colon\mcM\to \R$,
the Hesse form of $g$ coincides with the differential of $\grad g$ at locations, where the gradient vanishes, i.e., 
at $q^*\in \mcM$ with $\grad g(q^*) = 0$, it holds
\begin{equation*}
    \diff{}(\grad g)(q^*)[v] = \Hess g(q^*) [v],  \quad\text{ for all }v\in T_{q^*}\mcM.
\end{equation*}
In particular, the differential $\diff{}(\grad g)(q^*)$ has full rank, if the Hesse form $\Hess g(q^*)$ has full rank.
\end{enumerate}
\end{lemma}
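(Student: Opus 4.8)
The plan is to exploit the defining property of the Levi-Civita connection that the difference between the differential of a vector field and its covariant derivative is controlled by a term that is linear in the value of the vector field at the base point, which then vanishes precisely at zeros. Concretely, for the first part I would pick local coordinates (or a chart) around $q^*$ and write the vector field $G$ in components $G = G^a \partial_a$. The covariant derivative in direction $v = v^i\partial_i$ is $(\nabla_v G)(q^*) = \bigl(v(G^a) + \Gamma^a_{ij}(q^*)\, v^i G^j(q^*)\bigr)\partial_a$, whereas the ordinary differential (the coordinate derivative of the component functions) is $\diff G(q^*)[v] = v(G^a)\,\partial_a$ — keeping in mind that $\diff G(q^*)$ a priori maps into $T_{G(q^*)}(T\mcM)$, but at a zero the vertical subspace is canonically identified with $T_{q^*}\mcM$, so the comparison makes sense. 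Since $G^j(q^*) = 0$ for all $j$ by hypothesis, the Christoffel term drops out and the two expressions agree. I would phrase this intrinsically by noting that $\nabla_v G = \diff G(q^*)[v] + (\text{term linear in } G(q^*))$, so the correction is zero whenever $G(q^*) = 0$.

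For the second part I would simply specialize: take $G = \grad g$, which is a smooth vector field, and apply part 1. Its hypothesis $G(q^*) = 0$ becomes $\grad g(q^*) = 0$, and the conclusion $\diff G(q^*)[v] = (\nabla_v G)(q^*)$ becomes $\diff(\grad g)(q^*)[v] = (\nabla_v \grad g)(q^*)$, which is exactly $\Hess g(q^*)[v]$ by the definition \eqref{eq:Hesse_def}. The final sentence about full rank is then immediate: if the endomorphism $\Hess g(q^*)$ of $T_{q^*}\mcM$ is invertible, so is the linear map $\diff(\grad g)(q^*)$, since they are equal.

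The main subtlety to handle carefully — and the only place the argument is not completely routine — is the identification that makes the equality $\diff G(q^*)[v] = (\nabla_v G)(q^*)$ even type-correct. In general $\diff G(p)\colon T_p\mcM \to T_{G(p)}(T\mcM)$, and $T_{G(p)}(T\mcM)$ splits (non-canonically, via the connection) into a horizontal part isomorphic to $T_p\mcM$ and a vertical part also isomorphic to $T_p\mcM$; the covariant derivative $\nabla_v G$ lives in $T_p\mcM$ and corresponds to the vertical component of $\diff G(p)[v]$ under the connection map. When $G(p) = 0$, however, the point $G(p) = 0_p$ is the zero vector, the vertical subspace at $0_p$ has a canonical identification with $T_p\mcM$ (independent of any connection), and the horizontal component of $\diff G(p)[v]$ is just $v$ itself pushed forward by the bundle projection; the content of the lemma is that the "interesting" vertical part then literally equals the naive coordinate derivative of the components, because the connection-dependent correction term is proportional to $G(p)$. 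I would either spell this out via the connector/Dombrowski splitting or, more economically for the paper's purposes, just do the coordinate computation above and remark that at a zero of $G$ the comparison is canonical. Either way, no deep input is needed beyond the coordinate formula for $\nabla$.
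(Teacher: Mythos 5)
Your proof is correct, and part 2 matches the paper's (which likewise just specializes part 1 to $G=\grad g$ and invokes the definition of the Hessian). For part 1 you take a mildly different technical route: you work in local coordinates and observe that $\nabla_v G$ and the naive derivative of the components differ by the Christoffel term $\Gamma^a_{ij}v^iG^j(q^*)$, which is linear in $G(q^*)$ and hence vanishes at a zero. The paper instead expands $G$ along a curve $\gamma$ with $\dot\gamma(0)=v$ in a local \emph{orthonormal frame} $\{E_i\}$ and differentiates with the product rule; there the correction terms are of the form $\langle G(q^*),E_i(q^*)\rangle\,\deriv[t]\vert_{t=0}(E_i\circ\gamma)(t)$ and $\langle G(q^*), \covDeriv[t](E_i\circ\gamma)(0)\rangle$, which vanish for the same reason. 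Both arguments hinge on the identical observation that the connection-dependent discrepancy between $\diff{}G(q^*)[v]$ and $(\nabla_vG)(q^*)$ is proportional to $G(q^*)$; your coordinate version is slightly more economical, while the paper's frame version avoids Christoffel symbols and stays closer to the intrinsic product-rule toolkit it uses elsewhere. A point in your favour: you explicitly address why the identity is even type-correct, namely that $\diff{}G(q^*)$ a priori lands in $T_{0_{q^*}}(T\mcM)$ and only the canonical identification of the vertical subspace at the zero vector with $T_{q^*}\mcM$ makes the comparison meaningful; the paper passes over this silently by treating $\deriv[t]\vert_{t=0}(G\circ\gamma)(t)$ as if it were computed in a fixed vector space, which is justified only because the offending terms carry a factor of $G(q^*)=0$. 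No gap in either argument.
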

\begin{proof}
Let $v\in T_{q^*}\mcM$. We compute $\diff{}G(q^*)[v]$. 
Let $\gamma\colon I\to \mcM$ be a smooth curve with $\gamma(0) = q^*, \dot \gamma (0) = v$.
On a suitably small neighborhood $\tilde M$ around $q^*$, construct a local, orthonormal 
frame of vector fields $\{E_i\in \mathcal{X}(\mcM)\mid i=1,\ldots, m\}$
as outlined in \cite[Exercise 3.2, p. 24]{Lee:1997}.
This means that we obtain vector fields $E_i :\tilde M \to T\mcM$, $ i=1,\ldots, m$ such that at each $p\in \tilde M$,
$\{E_i(p)\mid i=1,\ldots, m\}$ is an orthonormal basis of $T_p\mcM$, i.e., 
\begin{equation*}
	 \langle E_i (p), E_j(p) \rangle_{p} = \delta_{ij}\quad \text{ for all } p\in \tilde M.
\end{equation*}
W.l.o.g., assume that the image of $\gamma$ is contained in $\tilde M$.
We express the vector field $G$ along $\gamma$ in terms of the orthonormal frame
\begin{equation*}
 (G \circ \gamma) (t) =
	    \sum_i \langle (G \circ \gamma) (t), (E_i\circ \gamma)(t) \rangle_{\gamma(t)} E_i(\gamma(t)),
\end{equation*}
and differentiate according to the product rule  (\cite[Chapter 2, Prop. 3.2]{DoCarmo:1992})
\begin{align*}
 \diff{}G(q^*)[v] &= \deriv[t]\big\vert_{t=0} (G \circ \gamma) (t)\\
	&= \sum_i \Bigl[ \deriv[t]\big\vert_{t=0}\left( \langle (G \circ \gamma) (t), E_i(\gamma(t)) \rangle_{\gamma(t)}\right) E_i(q^*)\\
	& \hspace{0.5cm}  + \langle 
            G (q^*)
    , E_i(q^*) \rangle_{q^*}  \deriv[t]\big\vert_{t=0} (E_i\circ \gamma)(t)\Bigr]\\
	 &= \sum_i \left( \langle \frac{\mathrm{D}(G \circ \gamma)}{\mathrm{d}t}(0), E_i(q^*) \rangle_{q^*} + 
	                           \langle
                            G (q^*)
                            , \frac{\mathrm{D}(E_i\circ \gamma)}{\mathrm{d}t}(0) \rangle_{q^*} 
	                  \right) E_i(q^*)\\
  &=  \sum_i  \langle (\nabla_v G) (q^*), E_i(q^*) \rangle_{q^*}  E_i(q^*) =  (\nabla_v G) (q^*).
\end{align*}
Note that we used twice that $G(q^*) = 0$.
If $G$ is the gradient vector field of a scalar function $G = \grad g$, then the latter becomes the Hessian,
 $(\nabla_v \grad g) (q^*) = \Hess g(q^*) [v]$.
\end{proof}
By Lemma \ref{lem:Hessefullrank4implicit}, the condition on the rank of the differential 
of a vector field translates to a condition on the covariant derivative (at points, where the vector field vanishes). This makes it straightforward to transfer any of the classical proofs of the implicit function theorem to the manifold setting.
\begin{theorem}[An implicit function theorem for vector fields]
\label{thm:ImplFunVF}
Let $\mcM$ be an $m$-di\-men\-sio\-nal Rie\-mannian manifold and let $G\colon\mcM\times \R^d \to T\mcM, (q,\omega)\mapsto G(q,\omega)\in T_q\mcM$ be a smooth mapping such that both
$q\mapsto G(q, \omega)\in T_q\mcM$ with $\omega$ considered as fixed and 
$\omega \mapsto G(q,\omega)\in T_q\mcM$ with $q$ considered as fixed are smooth vector fields.
Let $(q^*, \omega^*)\in \mcM\times \R^d$ be a point such that $G (q^*, \omega^*) = 0\in T_{q^*}\mcM$
and let $q\mapsto G^*(q) \coloneqq G(q,\omega^*)$.
Assume that the linear form $(\nabla_{(\cdot)} G^*)(q^*)\colon T_{q^*}\mcM \to T_{q^*}\mcM, v\mapsto (\nabla_{v} G^*)(q^*) $ has full rank $m$.\\
%
%
%
Then there exists an open domain $\Omega\subset\R^d$ around $\omega^*$ and a differentiable function $\hat f\colon\Omega \to \mcM, \omega \mapsto \hat f(\omega)$
such that 
\begin{equation}
\label{eq:ImplFunVF}
	 \Omega\ni \omega \mapsto 	G(\hat f(\omega), \omega))\equiv 0\in T_{\hat f(\omega)}\mcM.
\end{equation}
\end{theorem}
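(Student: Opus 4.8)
The plan is to reduce the statement to the classical Euclidean implicit function theorem by working in local coordinates (or equivalently a local trivialization of the tangent bundle), using Lemma~\ref{lem:Hessefullrank4implicit} to translate the hypothesis on the covariant derivative into the ordinary full-rank condition required there. First I would choose a smooth chart $(U,\psi)$ around $q^*$ with $\psi(q^*)=0\in\R^m$; this induces a local trivialization $TU\cong \psi(U)\times\R^m$ via the coordinate frame $\partial_1,\dots,\partial_m$. In these coordinates the map $G$ becomes a smooth function $\tilde G\colon \psi(U)\times\R^d\to\R^m$, $(x,\omega)\mapsto \tilde G(x,\omega)$, where the zero of $G$ at $(q^*,\omega^*)$ corresponds to $\tilde G(0,\omega^*)=0$. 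The condition \eqref{eq:ImplFunVF} we want is then exactly $\tilde G(\tilde f(\omega),\omega)\equiv 0$ for a smooth $\tilde f$ with $\tilde f(\omega^*)=0$, from which $\hat f \coloneqq \psi^{-1}\circ\tilde f$ does the job.

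The key step is to verify that the partial differential $\Diff_x\tilde G(0,\omega^*)\colon\R^m\to\R^m$ is invertible. Here is where Lemma~\ref{lem:Hessefullrank4implicit} enters: since $G^*=G(\cdot,\omega^*)$ is a vector field vanishing at $q^*$, its coordinate differential at $q^*$ equals the coordinate expression of the covariant derivative $(\nabla_{(\cdot)}G^*)(q^*)$. Concretely, writing the Christoffel symbols $\Gamma^k_{ij}$ of the chart, one has $(\nabla_{\partial_i}G^*)^k = \partial_i \tilde G^k + \Gamma^k_{ij}\tilde G^j$, and the second term vanishes at $q^*$ because $\tilde G(0,\omega^*)=0$; hence $\partial_i\tilde G^k(0,\omega^*) = (\nabla_{\partial_i}G^*)^k(q^*)$, i.e.\ $\Diff_x\tilde G(0,\omega^*)$ is precisely the matrix of the linear map $(\nabla_{(\cdot)}G^*)(q^*)$ in the coordinate basis. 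By hypothesis this map has full rank $m$, so $\Diff_x\tilde G(0,\omega^*)$ is invertible. The classical implicit function theorem (\cite[Thm 4.5]{Lee:2012} or any standard reference) then furnishes an open $\Omega\subset\R^d$ around $\omega^*$ and a smooth $\tilde f\colon\Omega\to\psi(U)$ with $\tilde f(\omega^*)=0$ and $\tilde G(\tilde f(\omega),\omega)\equiv 0$.

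The last step is to check that the conclusion is chart-independent, i.e.\ that $\hat f\coloneqq\psi^{-1}\circ\tilde f$ is a well-defined differentiable map into $\mcM$ satisfying $G(\hat f(\omega),\omega)=0\in T_{\hat f(\omega)}\mcM$ intrinsically; this is immediate since $\tilde G$ is just the coordinate representative of $G$ and $\tilde G\equiv 0$ is equivalent to $G\equiv 0$ as a section. The only genuine subtlety — and the step I expect to be the main obstacle to write cleanly — is the bookkeeping that the coordinate differential of a bundle-valued map at a zero agrees with the covariant derivative independently of the chosen connection-compatible trivialization; but this is exactly the content of part~1 of Lemma~\ref{lem:Hessefullrank4implicit}, expressed in coordinates, so it can be invoked rather than re-derived. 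Shrinking $\Omega$ if necessary so that $\tilde f(\Omega)\subset\psi(U)$ keeps everything inside the chart, and smoothness of $\hat f$ follows from smoothness of $\psi^{-1}$ and $\tilde f$.
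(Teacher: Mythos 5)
Your proposal is correct, but it follows a genuinely different route from the paper's. The paper stays coordinate-free: it forms the auxiliary map $\Gamma\colon\mcM\times\R^d\to T\mcM\times\R^d$, $(q,\omega)\mapsto(G(q,\omega),\omega)$, uses Lemma~\ref{lem:Hessefullrank4implicit} to see that its differential at $(q^*,\omega^*)$ has full rank, invokes the \emph{manifold} inverse function theorem to obtain a local inverse $H=(H_1,H_2)$, and reads off $\hat f(\omega)=H_1(0,\omega)$ --- the classical ``implicit from inverse'' trick transplanted to the bundle setting. You instead pass to a chart and coordinate frame, reduce $G$ to $\tilde G\colon\psi(U)\times\R^d\to\R^m$, and apply the Euclidean implicit function theorem directly; the crucial point, that the coordinate Jacobian $\Diff_x\tilde G(0,\omega^*)$ is the matrix of $(\nabla_{(\cdot)}G^*)(q^*)$ because the Christoffel terms $\Gamma^k_{ij}\tilde G^j$ drop out at a zero of $G$, is precisely the coordinate incarnation of part~1 of Lemma~\ref{lem:Hessefullrank4implicit}, and you supply it explicitly. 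This is essentially the alternative coordinate-based proof the paper attributes to Seguin, about which the paper remarks that the technical detail of Lemma~\ref{lem:Hessefullrank4implicit} is usually omitted --- your Christoffel computation is exactly that missing detail, so your version is complete. What the paper's approach buys is that no chart or trivialization ever has to be chosen or shown irrelevant; what yours buys is that only the classical Euclidean implicit function theorem is needed rather than a manifold inverse function theorem. One cosmetic slip: \cite[Thm 4.5]{Lee:2012} is the manifold inverse function theorem; the Euclidean implicit function theorem you actually invoke is \cite[Thm.\ C.40]{Lee:2012}. This does not affect the argument.
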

\begin{proof}
Use the standard trick to obtain the implicit function theorem from the inverse function theorem
see, e.g.,  \cite[Thm. C.40]{Lee:2012}. To this end, consider the auxiliary function
\[
    \gamma\colon\mcM \times \R^d \to T\mcM \times \R^d. \quad (q,\omega) \mapsto 
        \begin{pmatrix}
    \Gamma_1(q,\omega)\\
    \Gamma_2(q, \omega)
    \end{pmatrix}=
    \begin{pmatrix}
    G(q,\omega)\\
    \omega
    \end{pmatrix}.
\]
Note that $\Gamma$ is a smooth map between differentiable product manifolds.
By Lemma \ref{lem:Hessefullrank4implicit}, the differential of the first component function of $\Gamma$ by $q$ at the point $(q^*,\omega^*)$ is given by
$\diff{}_qG(q^*, \omega^*)[v] = (\nabla_{v} G^*)(q^*)$ and thus has full rank by assumption.
The differential of the second component function by $q$ vanishes, while its differential by $\omega$ is the identity on $\R^d$. As a consequence, the differential of $\Gamma$ at $(q^*,\omega^*)$
has full rank and the inverse function theorem yields the existence of a local inverse $\Gamma^{-1}= H = (H_1, H_2)$.
It holds 
\[
    \begin{pmatrix}
    v\\
    \omega
    \end{pmatrix}
    =
    \Gamma(H(v, \omega)))
    =
        \begin{pmatrix}
    G\left((H_1(v,\omega), H_2(v,\omega)\right)\\
    H_2(v,\omega)
    \end{pmatrix}.
  \]
 In particular, $H_2(v,\omega) = \omega$ and the function $\hat f\colon\omega\mapsto H_1(0, \omega)$ is the sought-after implicit function with
 $G(\hat f(\omega), \omega)  = G(H_1(0, \omega), H_2(v,\omega)) \equiv 0$ on a local domain, where $\hat f$ is defined.
\end{proof}
An alternative proof that works with local coordinates is given in \cite[Theorem 3.1]{Seguin:2022}. However, this proof omits the technical detail stated in Lemma \ref{lem:Hessefullrank4implicit}.
%
%

%
In the setting of Theorem \ref{thm:ImplFunVF}, implicit differentiation of \eqref{eq:ImplFunVF} along a curve 
$c:I\to \Omega$ with $c(0) = \omega, \dot c(0) = w$ yields
\begin{align*}
0&=\deriv[t]\big\vert_{t=0} G(\hat f( c(t)), c(t)) 
  = \left(\diff_qG((\hat f(\omega),\omega)), \diff{}_\omega G((\hat f(\omega),\omega))\right) 
     \begin{bmatrix}
            \diff{}\hat f_{\omega} [w]\\
            w
          \end{bmatrix}\\
 &= \diff_qG(\hat f(\omega),\omega)\left[\diff{}\hat f(\omega) [w]\right]
  +  \diff_\omega G(\hat f(\omega),\omega)[w]
\end{align*}
%
Since $G(\hat f(\omega),\omega) = 0$, Lemma \ref{lem:Hessefullrank4implicit} applies and yields
\begin{equation}
    \label{eq:ImplDiff_eq}
    \left(\nabla_{d\hat f(\omega) [w]} G(\cdot,\omega)\right) (\hat f(\omega)) = -  \diff{}_\omega G(\hat f(\omega),\omega)[w] \in T_{\hat f(\omega)}\mcM.
\end{equation}
Similar considerations were made in \cite[Section 6]{Sander:2016}, but for a different purpose.
\subsection{A relation between the derivatives of the interpolation weight functions and the sampled derivatives}
Now, we apply the results of Subsection \ref{sec:TechPrep} to the specific gradient field $G$ of \eqref{eq:objectiveG_VF}.
Recall that $G$ is the gradient vector field (with respect to $q$) of the scalar function $L(q,\omega) = \frac12 \sum_{j=1}^k \varphi_j(\omega)\dist(q,p_j)^2$
so that $G(q,\omega) \coloneqq \grad_q L(q,\omega) = -\sum_{j=1}^k \varphi_j(\omega) \Log_q(p_j)$.
By construction, $G$ vanishes at the sample locations $q=p_l$, $\omega = \omega_l$, $j=1,\ldots, k$.
Moreover, by Theorem \ref{thm:Hesse_id} and \eqref{eq:Hesse_id_at_p}, the Hessians at the sample locations have full rank.
Theorem \ref{thm:ImplFunVF} yields locally an implicit parameterization of the zero set of the gradient field around each sample point.
Since the sought-after interpolant is determined via an optimization problem, and thus, in turn, via the zero set of the gradient field,
the interpolant coincides with the implicit function $\hat f$. In particular, $\hat f(\omega_l) = p_l$.\\
Fix a sample point $(\omega_l,p_l)$, $l\in 1,\ldots k$.
Let $\{e_i\in\R^d\mid i=1,\ldots,d\}$ denote the Cartesian unit vectors in $\R^d$.
With $w=e_i$, $\omega=\omega_l$, eq. \eqref{eq:ImplDiff_eq} relates the partial derivatives $v^i_l \coloneqq\partial_i \hat f(\omega_l)=\diff{}\hat f(\omega_l)[e_i]$ of $\hat f$ to the partial derivatives of the interpolation weight functions,
\begin{align*}
  \left(\nabla_{v^i_l} \grad_q L(p_l,\omega_l) \right) (p_l)
    &= -  \diff{}_\omega G(p_l,\omega_l)[e_i] \quad \Leftrightarrow\\
  \Hess_{q} L(p_l,\omega_l)[v^i_l]
  &= v^i_l = \sum_{j=1}^k \partial_i\varphi_j(\omega_l) \Log_{p_l}(p_j)
  = \sum_{j=1, j\neq l}^k \partial_i\varphi_j(\omega_l) \Log_{p_l}(p_j).
\end{align*}
This is in direct correspondence with \eqref{eq:implderivEuc} in the Euclidean setting, see \Cref{app:EuclideanCase} in the supplements.

In order to be able to match any prescribed, sampled partial derivatives 
$v^i_l = \partial_i \hat f(\omega_l)$, we require the sets
\begin{equation*}
	\mathcal L_l \coloneqq \{\Log_{p_l}(p_j)\mid  j=1,\ldots,k, j\neq l\} 
	\subset T_{p_l}\mcM, \quad l=1,\ldots k
\end{equation*}
to span the full tangent space $T_{p_l}\mcM$.
Then any choice of interpolation weight function $\varphi_j$ with $\varphi_j(\omega_l)=\delta_{jl}$ that satisfies
\begin{equation}
\label{eq:HermiteConditionSystem}
 v^i_l = \sum_{j=1, j\neq l}^k \partial_i\varphi_j(\omega_l) \Log_{p_l}(p_j)
\end{equation}
yields an interpolant $\hat f$ that satisfies the interpolation conditions \eqref{eq:basicHermiteMnf1}, \eqref{eq:basicHermiteMnf2}. 
\begin{remark}
Recall that $\text{dim}(\mcM)=m$.
For every fixed index $l\in \{1,\ldots,k\}$, let $\{E^l_j\mid  j=1,\ldots,m\}\subset T_{p_l}\mcM$ be a local orthonormal frame. Then, each of the tangent vectors
$\Log_{p_l}(p_j)\in T_{p_l}\mcM$ features a representation with $m$ coordinate coefficients
\[
    \Log_{p_l}(p_j) = x^j_l(1) E^l_1 + \cdots + x^j_l(m) E^l_m.
\]
Likewise for the sampled derivatives,
\[
    v_l^i = \alpha_l^i(1) E^l_1 + \cdots +  \alpha_l^i(m) E^l_m.
\]
With respect to the coordinates of the local frame, the unknowns in \eqref{eq:HermiteConditionSystem}, i.e., the scalar coefficients $\partial_i\varphi_j(\omega_l)$,  are determined by
\begin{equation}
    \label{eq:coeffs_in_local_frame}
  \begin{pmatrix}
    x^1_l(1) &\ldots &x^{l-1}_l(1) &x^{l+1}_l(1)&\ldots & x^k_l(1)\\
    \vdots&       &\vdots    &\vdots   &       &\vdots\\
    x^1_l(m) &\ldots &x^{l-1}_l(m) &x^{l+1}_l(m)&\ldots & x^k_l(m)
  \end{pmatrix}
  \begin{pmatrix}
    \partial_i\varphi_1(\omega_l)\\
    \vdots\\
    \partial_i\varphi_{l-1}(\omega_l)\\
    \partial_i\varphi_{l+1}(\omega_l)\\
    \vdots\\
    \partial_i\varphi_k(\omega_l)\\
  \end{pmatrix}
  =
    \begin{pmatrix}
    \alpha_l^i(1)\\
    \vdots\\
    \vdots\\
    \alpha_l^i(m)\\
  \end{pmatrix}.
\end{equation}
The system matrix in \eqref{eq:coeffs_in_local_frame} is of dimensions
$m\times (k-1)$.
In order to ensure that solutions exist, the number of sample points must be larger  than the manifold dimension, $k> m$, and the logs $\Log_{p_l}(p_j)$, $j\neq k$ must span the tangent space $T_{p_l}\mcM$.
If $k> m+1$, we have more coefficients $\partial_i\varphi_j(\omega_l)$ than are needed for a representation of
$v^i_l$ in terms of the logs. The system is underdetermined.
The additional degrees of freedom may be used to  impose extra conditions, e.g. that the derivative coefficients sum up to zero.
In fact, this is required for consistency, because $\sum_j \varphi_j(\omega)\equiv 1$, i.e., the weights correspond to a discrete signed measure of unit weight.
\end{remark}
\section{Multivariate Hermite interpolation in the tangent space}
\label{sec:THI}
As an alternative to BHI, multivariate Hermite interpolation on a manifold may also be conducted in a straightforward manner by moving all data,  i.e., sample points and derivatives, to a selected tangent space and to perform the interpolation therein.
The following steps detail this approach.
\begin{enumerate}
    \item Choose a manifold location $q_0\in \mcM$ that is to act as the `data center'. For example, $q_0$ may be one of the sample points or it may be the Riemannian barycenter of the given sample data set.
    \item Map all sampled manifold locations to the tangent space at $q_0$, i.e., compute
    $w_j = \Log_{q_0}(pj)\in T_{q_0}\mcM$, $j=1,\ldots,k$.
    \item Transport the sampled velocity vectors to the selected tangent space $T_{q_0}\mcM$.
    To this end, compute for $i=1,\ldots, d, \hspace{0.1cm} l = 1,\ldots, k$, 
    \begin{eqnarray}
    \nonumber
       T_{q_0}\mcM\ni  \hat v_l^i 
       &=& d(\Log_{q_0})_{p_l}[\partial_i f(\omega_l)]\\
       \label{eq:THI_FD_prep}
       &\approx&\frac{\Log_{q_0}( \Exp_{p_l}(\Delta t \partial_i f(\omega_l))) - \Log_{q_0}(\Exp_{p_l}(-\Delta t \partial_i f(\omega_l)))}{2\Delta t}.
    \end{eqnarray}
    The latter finite-differences approximation can be used in cases, where the Riemannian log map is not available in closed form,
    for details and remarks on the numerical accuracy, see \cite{Zimmermann:2020}.
    \item With all data gathered on one and the same tangent space, classical Hermite interpolation in vector spaces can be pursued. The interpolant is constructed as a weighted linear combination of the sampled data
    as
    \begin{equation}
    \label{eq:tanginterp}
        \hat f_{\text{tan}}(\omega) = \sum_{j=1}^k \varphi_j(\omega) w_j + \sum_{i=1}^d\sum_{l=1}^k \psi_{i,l}(\omega) \hat v_l^i \in T_{q_0}\mcM.
    \end{equation}
    \item The sampled data provides Hermite sample values for the weight functions $\varphi_j(\omega), \psi_{i,l}(\omega)$.
    Let $\vec{\varphi_j}:= \left(\varphi_j(\omega_1),\ldots,\varphi_j(\omega_k)\right)$ be the vector 
    of sampled values of the $j$th coefficient function $\omega \mapsto \varphi_j(\omega)$.
    Likewise, let
    $\vec{\psi}_{i,l}:= \left(\psi_{i,l}(\omega_1),\ldots,\psi_{i,l}(\omega_k)\right)$.
    For each of the coefficient functions, one needs to solve a multiple-input scalar-output
    Hermite interpolation problem, where the sample locations are $\omega_1,\ldots,\omega_k$
    and the sample values for the various coefficient functions are 
    \begin{equation}
        \label{eq:THI_sample_data}
        \begin{pmatrix}
        \vec{\varphi}_{j}\\
        \partial_1 \vec{\varphi}_{j}\\
        \vdots\\
        \partial_d \vec{\varphi}_{j}
        \end{pmatrix}
        =
        \begin{pmatrix}
        e_j\\
        \mathbf{0}\\
        \vdots\\
        \mathbf{0}
        \end{pmatrix},
    \quad 
     \begin{pmatrix}
        \vec{\psi}_{1,l}\\
        \partial_1 \vec{\psi}_{1,l}\\
        \vdots\\
        \partial_d \vec{\psi}_{1,l}
        \end{pmatrix}
    =
        \begin{pmatrix}
        \mathbf{0}\\
        e_l\\
        \vdots\\
        \mathbf{0}
        \end{pmatrix},
         \ldots,  
     \begin{pmatrix}
        \vec{\psi}_{d,l}\\
        \partial_1 \vec{\psi}_{d,l}\\
        \vdots\\
        \partial_d \vec{\psi}_{d,l}
        \end{pmatrix}
    =
        \begin{pmatrix}
        \mathbf{0}\\
        \mathbf{0}\\
        \vdots\\
        e_l
        \end{pmatrix} \in \R^{k(d+1)}.
   \end{equation}
   
    Here, the data is written in vector packs of size $k$ 
    ($k$ values from $\varphi_j(\omega)$ evaluated at the $k$ sample locations, 
    $k$ values from $\partial_1\varphi_j(\omega)$ evaluated at the $k$ sample locations, etc. )
    and $e_j=(0,\ldots,\stackrel{j}{1},\ldots, 0)^{\mathrm{T}}\in\R^k$ denotes the $j$th canonical unit vector.
    \item The interpolants of the coefficient functions can now be obtained from any multivariate interpolation scheme that is able to tackle Hermite data, e.g., the gradient-enhanced Kriging method, \cite[Section 7.2]{ForresterSobesterKeane:2008}.
    In summary, this requires to solve $k(d+1)$ Euclidean interpolation problems.
    The resulting tangent vector of $\eqref{eq:tanginterp}$ is mapped back to the manifold via
    \begin{equation}
        \label{eq:THI_interpolant}
        \hat f(\omega) = \Exp_{q_0}(\hat f_{\text{tan}}(\omega)) \in \mcM.
    \end{equation}
\end{enumerate}
\paragraph{Preliminary comparison.}
For simplicity, the above interpolation approach is referred to as the tangent space Hermite interpolation \textbf{(THI)} method.
The BHI method outlined in Section \ref{sec:baryHermite} works with a number of $k$ weight coefficient functions with properly adjusted partial derivatives, which are obtained by solving the linear equation systems \eqref{eq:coeffs_in_local_frame}.
The THI method works with $k(d+1)$ coefficient functions, each of which comes with its own Hermite sample data set. 
At the preprocessing stage of the BHI approach, the Riemannian logarithm function needs to be queried. In contrast, prior to conducting THI, one needs to query the Riemannian logarithm as well as its total derivative (or an approximation thereof).
In the online stage of BHI, the values of the coefficient functions at an untried location $\omega^*\in\R^d$ are determined via a Riemannian optimization problem.
In fact, it is the exact same Riemannian optimization problem that needs to be tackled for performing non-Hermite barycentric interpolation, except that the weight functions are properly adjusted a priori. The practitioner may choose any preferred method from the zoo of optimization algorithms, see, e.g., \cite{AbsilMahonySepulchre:2008}, and
retractions could be used to replace the Riemannian exponential.
For THI, the values of the coefficient functions at $\omega^*$ are obtained via solving $k(d+1)$ Euclidean interpolation problems in a selected tangent space.
The back-mapping to the manifold must be via the Riemannian exponential, for a retraction would compromise the interpolation condition, unless the inverse of the same retraction is also used at the preprocessing stage.

Table \ref{tab:CompMethods} summarizes the main features of both approaches.
\begin{table}[tbp]
    \begin{tabular}{ccc}
    \toprule
                            & Barycentric           & Tangent space\\ \midrule
\#(coefficient functions)   & $k$           & $k(d+1)$\\
minimum \#(samples)         & $k>\dim(\mcM) +1$ & no restriction \\
base-point dependent        & no            & yes \\
manifold data processing via&  $\Log_q$     &$\Log_q$,  $d(\Log_q)_p$\\
model evaluation via        &Riemann. optimization & interp. in tangent space\\
\bottomrule
    \end{tabular}
    \caption{Comparison of the barycentric and the tangent space approach to Hermite manifold interpolation for a $d$-variate interpolation problem with a number of $k$ sample locations $\omega_j\in \R^d$.}
    \label{tab:CompMethods}
\end{table}
%
\section{Numerical examples}
\label{sec:numex}
In this section, we demonstrate multivariate Hermite manifold interpolation by means of academic examples.
For clarity of the exposition, but for no mathematical reasons, we focus in the experiments on the case, where $\mcM$ is an embedded submanifold of $\R^{N}$ and we express data points $p\in\mcM$ and tangent vectors $v\in T_p\mcM$ in \emph{extrinsic coordinates}. This means that we use coordinates of $\R^N$ rather than local coordinates to address locations on $\mcM$. This situation is also to be expected in practical applications.
\begin{example*}
On the unit sphere $\mcM=S^2=\{p\in\R^3\mid p_1^2+p_2^2 +p_3^2 =1\}\subset \R^3$, which is of (manifold) dimension $m=2$, 
  points are addressed by their three Euclidean coordinates $p=(p_1,p_2,p_3)^{\mathrm{T}}$.
Likewise, tangent vectors $v\in T_pS^2=\langle p\rangle^\bot$ are also given by extrinsic coordinates $v=(v_1,v_2,v_3)^{\mathrm{T}}\in T_pS^2\subset \R^3$.
\end{example*} 
Consider a differentiable, $d$-variate, $\mcM$-valued function
\begin{equation*}
    f\colon\R^d\supset D\to \mcM\subset \R^N, \omega \mapsto f(\omega)
\end{equation*}
Suppose that $k$ sample locations $\omega^j=(\omega^j_1,\ldots,\omega^j_d)^{\mathrm{T}}\in \R^d$ are selected
and that sample points $p^j =(p^j_1,\ldots,p^j_N)^{\mathrm{T}} = f(\omega^j)\in\mcM\subset \R^N$ and tangent vectors $v^i_j=\partial_i f(\omega^j)\in T_{p_j}\mcM\subset\R^N,i=1,\ldots,d$ are available.\\
For THI, the coefficient weight functions are obtained by Hermite interpolation of the data sets listed in \eqref{eq:THI_sample_data}.
In the BHI approach, we construct interpolants by computing numerical solutions to the Riemannian optimization problem \eqref{eq:baryInterp}.
In this case, the vector of sample values for the $j$th weight function $\varphi_j$ in \eqref{eq:baryInterp} is
\begin{equation}
\label{eq:sample_weight_funs}
    (\varphi_j(\omega^1), \ldots, \varphi_j(\omega^k)) = e_j = (0,\ldots,0,\stackrel{j}{1},0,\ldots,0)\in  \R^k
\end{equation}
and is independent of the sample points $p_j\in \mcM$.
The vector of the partial derivatives of $\varphi_j$ at the sample locations is obtained from Algorithm \ref{alg:partial_derivs}.
\begin{algorithm}{Practical computation of the derivatives of the weight functions for BHI.}
\begin{algorithmic}[1]
\REQUIRE{Hermite data set consisting of sample pairs $ (p_j=f(\omega_j), \omega_j)\in \mcM \times\R^d$, $ j=1,\ldots,k$, and partial derivatives $v_l^i = \partial_i f(\omega_l)\in T_{p_l}\mcM$, $i=1,\ldots,d$, $l=1,\ldots,k$.
         }
    \FOR{$i=1,\ldots,d$} 
      \FOR{$l=1,\ldots,k$} 
        \STATE{Compute $\Log_{p_l}(p_j)\in T_{p_l}\mcM\subset \R^N$, $j=1,\ldots,k$, $j\neq l$}
       \STATE{$X_l\coloneqq \begin{pmatrix}[c|c|c|c|c|c]
         \Log_{p_l}(p_1) & \ldots & \Log_{p_l}(p_{l-1})& \Log_{p_l}(p_{l+1}) &\ldots &\Log_{p_l}(p_k)
       \end{pmatrix}\in \R^{N\times (k-1)}$}
       \STATE{Solve $
       \begin{pmatrix}
         X_l\\
         1,\ldots,1
       \end{pmatrix} 
      \begin{pmatrix}
         c_1\\
         \vdots\\
         c_{k-1}
       \end{pmatrix}
       =
       \begin{pmatrix}
         v^i_l\\
         0
       \end{pmatrix}$}
       \STATE{ $\left(
         \partial_i\varphi_1(\omega_l),
         \ldots,
         \partial_i\varphi_{l-1}(\omega_l),
         \partial_i\varphi_l(\omega_l),
         \partial_i\varphi_{l+1}(\omega_l),
         \ldots,
         \partial_i\varphi_k(\omega_l)
       \right) \coloneqq 
       \left(
         c_1,
         \ldots,
         c_{l-1},
         0,
         c_l,
         \ldots,
         c_{k-1}
       \right)$}
      \ENDFOR
    \ENDFOR
\ENSURE{Partial derivatives of the weight interpolation functions at the sample locations $\partial_i\varphi_{j}(\omega_l)$, $j,l=1,\ldots,k$, $i=1,\ldots,d$.}
\end{algorithmic}
\label{alg:partial_derivs}
\end{algorithm}
\begin{remark}
In the upcoming experiments, we face the setting, that $\text{dim}(\mcM)=m<k-1$, where $k$ is the number of sample points. Hence, the linear system in step 5 of Alg. \ref{alg:partial_derivs} is underdetermined. The bottom row of all ones is added to the system matrix to enforce that $\sum_j \partial_i\varphi_{j}(\omega_l) = 0$.\\
The minimum 2-norm solution to the system is obtained via the pseudo-inverse based on the SVD, see
\cite[\S 5.7, p. 270--273]{GolubVanLoan:1996}.
Let $r = \rank X$
and let $U_r\Sigma_r V^{\mathrm{T}}_r = X$
be the reduced SVD, i.e., 
$U_r\in \R^{N\times r}$,
$\Sigma_r\in\R^{r\times r}$,
$V_r\in \R^{(k-1)\times r}$. 
The columns of $U_r$ form a basis for the column space of $X$.
The underdetermined equation $Xc = v_l^i$ corresponds to an equation for the coordinates with respect to the basis given by $U_r$, namely,
$(U_r^{\mathrm{T}} X) c= (\Sigma_r V_r^{\mathrm{T}}) c = \tilde{v}_l^i = U_r^{\mathrm{T}}v_l^i$.
Writing $\bone\in \R^{k-1}$ for the vector with all entries equal to $1$, we enforce that the derivatives of the coefficient functions sum up to zero via
\[
  \tilde{X}_r c :=
    \begin{pmatrix}
      \Sigma_r V_r^{\mathrm{T}} \\
      \bone^{\mathrm{T}}
    \end{pmatrix} c
    =    \begin{pmatrix}
       \tilde{v}_l^i\\
      0
    \end{pmatrix}.
\]
The minimum 2-norm solution is obtained via $c = \tilde{X}_r^{\mathrm{T}}(\tilde{X}_r\tilde{X}_r^{\mathrm{T}})^{-1}\tilde{v}_l^i$. Using the SVD data of $X$ and Schur complement inversion, this can be calculated in closed form
\begin{equation}
\label{eq:underdetX_closed_form}
    c = x + \frac{\langle\bone, x\rangle}{s} \left(V_r (V_r^{\mathrm{T}}\bone) - \bone \right),
    \quad
    x = V_r\Sigma_r^{-1} \tilde{v}_l^i \in \R^{k-1},
    \quad
    s = (k-1) - \lVert V_r^{\mathrm{T}}\bone\rVert_2^2\in\R.
\end{equation}
\end{remark}
For constructing the interpolation weight functions $\varphi_j$ in BHI and $\varphi_j, \psi_{i,l}$ in THI, we use the method of gradient-enhanced Kriging with the cubic correlation model and a
fixed correlation hyperparameter vector $\theta = (\theta_1,\theta_2) = (0.5, 0.5)$.
The cubic correlation model is
$\rho(\theta,\omega^i,\omega^j)= \prod_{l=1}^{2}{\rho_l(\theta_l, (\omega^i_l - \omega^j_l))}$,
where 
\[
\rho_l(\theta_l, (\omega^i_l - \omega^j_l)) =
\left\{
\begin{array}{ll}
	1-3\left(\theta_l |\omega^i_l - \omega^j_l|\right)^2 + 2 \left(\theta_l |\omega^i_l - \omega^j_l|\right)^3, &  \theta_l |\omega^i_l - \omega^j_l|<1\\
	0, & \theta_l |\omega^i_l - \omega^j_l|\geq 1
\end{array}
\right. .
\]
For the details, we refer to \cite[Section 7.2]{ForresterSobesterKeane:2008} and \cite{Zimmermann:2013}.
The input sample values are given by \eqref{eq:sample_weight_funs} and Algorithm \ref{alg:partial_derivs} for BHI and by \eqref{eq:THI_sample_data} for THI,  respectively.
BHI requires to solve the optimization problem \eqref{eq:baryInterp} and thus to provide the optimizer with an initial guess.
In the first run of the optimizer, we use the first sample location as the initial guess, i.e., $q_0 = p_1$. For every consecutive run, we use the optimized solution $q^*$ from the previous run as the starting point for the next optimization procedure.
%
%
\subsection{Interpolation of the Gauß map of the Helicoid}
\label{sec:numex_Helicoid}
In this section we consider an academic example on the unit sphere $\mcM=S^2$.
The Riemannian exponential and logarithmic maps on $S^2$ are
\begin{align*}
    \Exp_q(v) &= \cos(\lVert v\rVert_2)q +\sin(\lVert v\rVert_2) \frac{v}{\lVert v\rVert_2}\in S^2,
    \\
    \quad \Log_q(p) &= \arccos(\langle q,p\rangle)
    \frac{p-\langle q,p\rangle q}{\lVert p-\langle q,p\rangle q\rVert_2}\in T_qS^2,
\end{align*}
respectively.
As a test function, we use the Gauß map of the helicoid in $\R^3$,
\begin{equation*}
    f\colon [a,b]^2\to S^2, \quad (\omega_1,\omega_2) \mapsto \frac{1}{(e^{2\omega_1} +1)}
    \begin{pmatrix}
       2e^{\omega_1}\cos(\omega_2)\\
       2e^{\omega_1}\sin(\omega_2)\\
       e^{2\omega_1} - 1
    \end{pmatrix}.
\end{equation*}
This function yields the normal field of the helicoid and is obtained from the stereographic projection of $(\omega_1,\omega_2)\mapsto \exp(\omega_1 +i\omega_2)\in \C$ onto $S^2$.\\
The partial derivatives of $f$ are
\begin{align*}
\partial_1 f(\omega_1,\omega_2) &= \frac{-2e^{2\omega_1}}{(e^{2\omega_1}+1)^2} 
        \begin{pmatrix}
          2e^{\omega_1}\cos(\omega_2) \\ 2e^{\omega_1}\sin(\omega_2) \\  e^{2\omega_1} - 1
        \end{pmatrix}
        + \frac{2}{e^{2\omega_1}+1} \begin{pmatrix}
          e^{\omega_1}\cos(\omega_2) \\ e^{\omega_1}\sin(\omega_2) \\ e^{2\omega_1}
        \end{pmatrix}, \\
  \partial_2 f(\omega_1,\omega_2) &= \frac{1}{e^{2\omega_1}+1} \begin{pmatrix}
    -2e^{\omega_1}\sin(\omega_2) \\ 2e^{\omega_1}\cos(\omega_2)\\ 0
    \end{pmatrix}.
\end{align*}

%
We sample the function values and partial derivatives of $f$ on $k=9$ uniformly distributed sample locations $(\omega^j_1,\omega^j_2)\in [a,b]^2 = \left[-\frac14\pi, \frac14\pi\right]^2$, $j=1,\ldots,9$ and conduct BHI and THI.

The numerical gradient descent method of Algorithm \ref{alg:BaryInt} is conducted with a fixed step size of $\alpha =1.0$ and a numerical convergence threshold of $\tau = 1.0\cdot 10^{-8}$. For THI, the Riemannian barycenter of the sample data set is used as the tangent space base point.
The respective interpolant is evaluated on a uniform grid of $101\times 101$ points in the same domain $[a,b]^2$.
%

\begin{table}[tbp]
\begin{small}
    \centering
    \begin{tabular}{cccccc}\toprule
    \multicolumn{6}{c}{\emph{Parameter settings}}\\
     Manifold               & \#{variables}              & domain                   & \#{samples} & threshold                & line step\\
     $S^2$                  & $d=2$                     & $[-\frac14\pi, \frac14\pi]^2$ &  $k=9$           & $\tau =1.0\cdot 10^{-8}$       & $\alpha = 1.0$\\[\baselineskip]
    \midrule
    & \\
     \midrule
     \multicolumn{6}{c}{\emph{Results: barycentric Hermite interpolation (BHI)}}\\
      \multicolumn{2}{c}{Wall clock time} & \multicolumn{2}{c}{Interpolation error} & \multicolumn{2}{c}{FD error}\\
      \cmidrule(r){1-2}\cmidrule(lr){3-4}\cmidrule(l){5-6}
     offline & online & max & avg & avg.~$\partial_1 \hat f(\omega^j) $ & avg.~$\partial_2 \hat f(\omega^j)$\\
     $7.2\cdot 10^{-3}$s & $6.42\cdot 10^{-5}$s                    &  $2.14\cdot 10^{-2}$          &$8.23\cdot 10^{-3}$& $6.17\cdot 10^{-6}$           & $6.27\cdot 10^{-6}$\\
       \midrule
      \multicolumn{6}{c}{\emph{Results: tangent space Hermite interpolation (THI)}}\\
      \multicolumn{2}{c}{Wall clock time} & \multicolumn{2}{c}{Interpolation error} & \multicolumn{2}{c}{FD error}\\
      \cmidrule(r){1-2}\cmidrule(lr){3-4}\cmidrule(l){5-6}
     offline & online & max & avg & avg.~$\partial_1 \hat f(\omega^j) $ & avg.~$\partial_2 \hat f(\omega^j)$\\
     $2.5\cdot 10^{-2}$s & $1.5\cdot 10^{-4}$s                    &  $2.36\cdot 10^{-3}$          &$1.10\cdot 10^{-3}$& $6.47\cdot 10^{-6}$           & $7.61\cdot 10^{-6}$\\
    \bottomrule
    \end{tabular}
    \caption{Parameter settings and interpolation results for the experiment of Subsection \ref{sec:numex_Helicoid} associated with Figure \ref{fig:Error_gauss_3x3}.
    The `offline' time refers to the construction of the interpolant from the sampled data, while the `online' time accounts for querying the interpolant at a trial location.}
    \label{tab:numex1}
\end{small}
\end{table}
Table \ref{tab:numex1} summarizes the parameter settings for this experiment and lists the wall clock time and the interpolation errors.
The wall clock time is split into an offline stage and an online stage.
The offline stage consists of the construction of the Hermite interpolants for the weight coefficient functions $\varphi_j$ for BHI and $\varphi_j, \psi_{i,l}$ for THI, respectively. The online stage accounts for querying the interpolant at a given parameter location.
The evaluation times are averaged over the number of $10,201= 101^2$ runs.\\
The sample data set in form of surface normal vectors, their partial derivatives and the interpolation error surfaces are displayed in Figure \ref{fig:Error_gauss_3x3}.
\begin{figure}[tbp]
    \centering
    \includegraphics[width=1.0\textwidth]{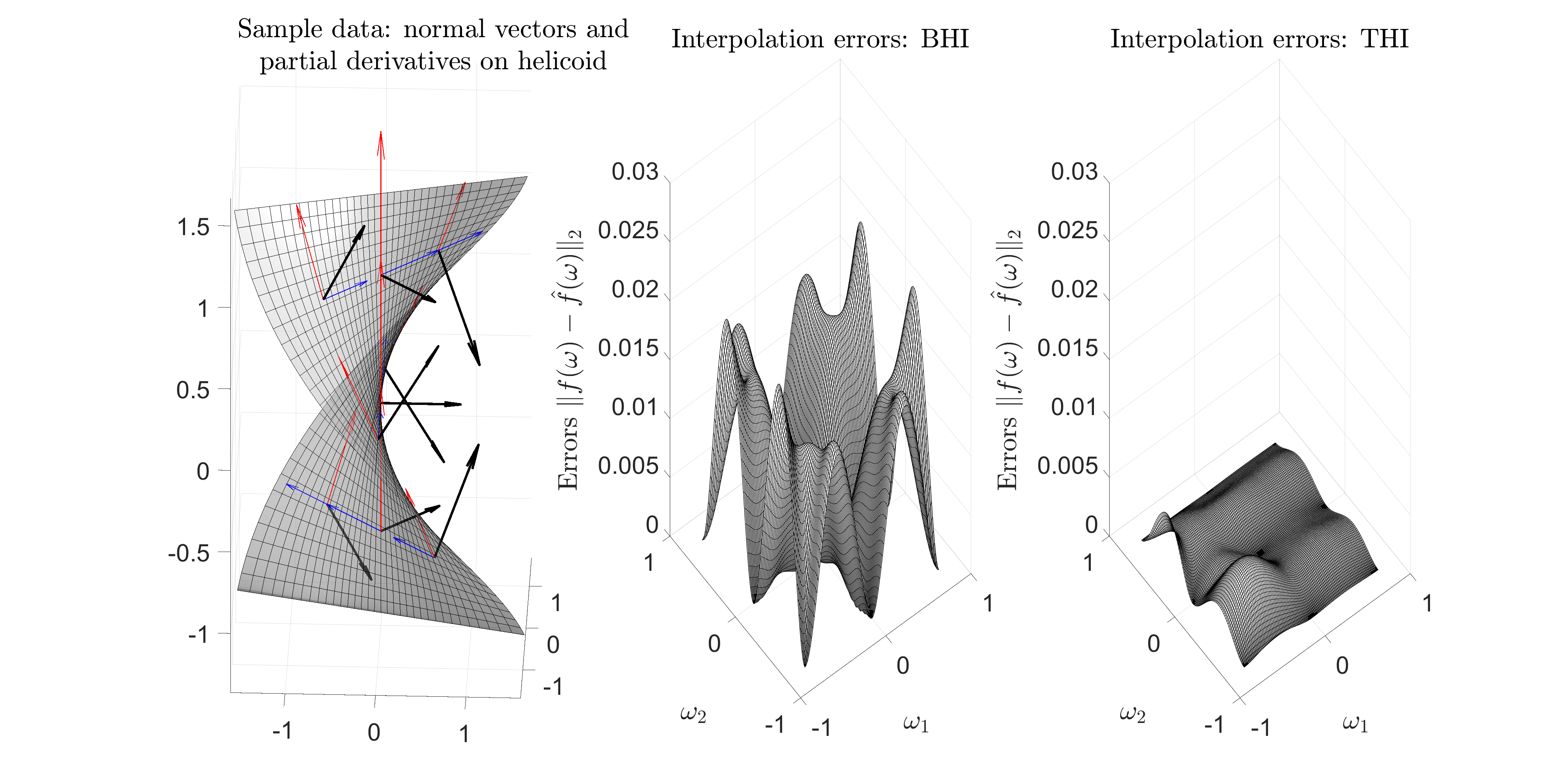}
    \caption{Interpolating the Gauß map of the Helicoid from Subsection \ref{sec:numex_Helicoid}. Left: sample data set and partial derivatives.
    Middle: error surface obtained with BHI. Right: error surface obtained with THI.
    }
    \label{fig:Error_gauss_3x3}
\end{figure}

As can be seen from the Table \ref{tab:numex1} and Figure \ref{fig:Error_gauss_3x3}, the averaged accuracy of the THI interpolant is roughly a factor of $6$ times better than the averaged accuracy of the BHI interpolant. A finite difference check shows that both methods provide interpolants that meet the sampled tangent vectors.
The computation time for the offline stage under the THI approach is $3.5$ times higher than that for BHI.
This is expected, since for THI, the data preprocessing involves evaluating the Riemann exp and log maps plus the differential of the latter for moving the sampled derivatives to the same tangent space.\footnote{For the academic case at hand, a closed form calculation of $\diff{}(\Log_q)_p$ is possible. However, for consistency with the general case, in the numerical experiments, we work with the finite difference approximation of \eqref{eq:THI_FD_prep}.} Moreover, a number of 
$k(d+1)=27$ Euclidean Hermite interpolation problems have to be solved to obtain the interpolated weight functions. In contrast, for BHI, only $k=9$ coefficient functions have to be fitted. This, however, involves solving an underdetermined linear equation system.\\
In the online stage of BHI, the basic gradient descent converges after an average number of $4.9$ iterations with the worst case taking $7$ iterations.
On average, for the test case at hand, BHI is online $2.3$ times faster than THI. However, too much weight should not be given to the timing results, as they are highly dependent on the chosen optimization algorithm and choosing a `best one' is beyond the scope of these experiments.\\
The THI method produces different interpolants for different choices of the tangent space base point.
To illustrate this issue, Figure \ref{fig:Error_base_popint_3x3} shows the error surfaces corresponding to three different base-point selections.
The left picture Figure \ref{fig:Error_base_popint_3x3} is produced by working in $T_{p_1}S^2$, 
the middle picture by working in $T_{p_4}S^2$,
and the right picture corresponds to working in $T_{p_9}S^2$, where $p_j = f(\omega_j)$.
\begin{figure}[tbp]
    \centering
    \includegraphics[width=1.0\textwidth]{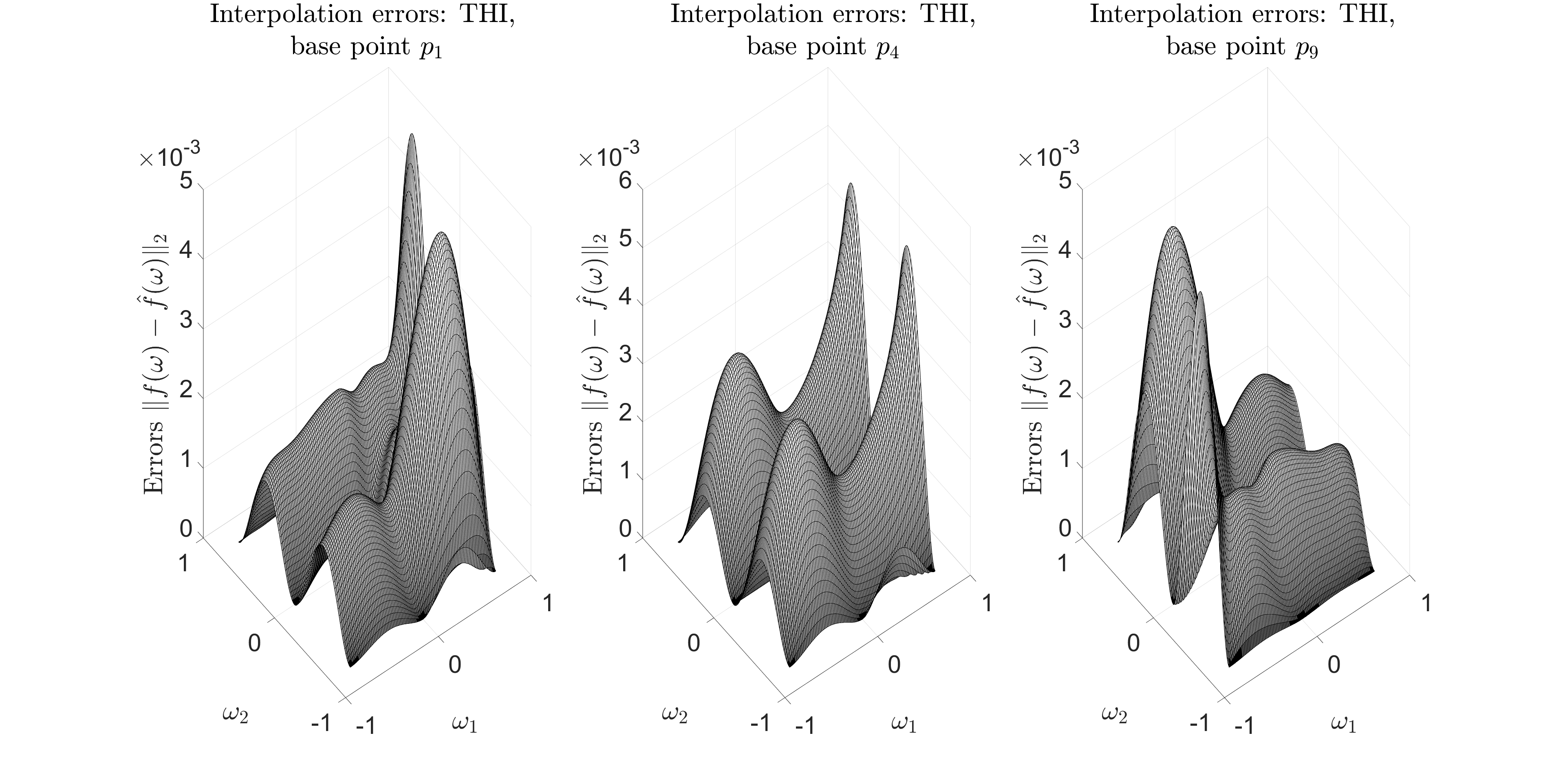}
    \caption{Error surfaces under the THI method from Subsection \ref{sec:numex_Helicoid} for different base tangent spaces.
    }
    \label{fig:Error_base_popint_3x3}
\end{figure}
%
\subsection{Interpolation on \texorpdfstring{$SO(3)$}{SO(3)}}
\label{sec:numex_SO3}
The next test case considers a Hermite data set on the special orthogonal group $SO(3) = \{Q\in \R^{3\times 3}\mid Q^{\mathrm{T}}Q = I, \det(Q) = 1\}$.
The tangent space of $SO(3)$ at a point $Q\in SO(3)$ is 
$T_QSO(3) = \{V\in\R^{3\times 3}\mid Q^{\mathrm{T}}V +V^{\mathrm{T}}Q = 0\}$. The Riemannian $\exp$ and $\log$ maps on $SO(3)$ are
\begin{align*}
\Exp_Q\colon& T_QSO(3) \to SO(3),& V\mapsto Q\exp_m(Q^{\mathrm{T}}V),\\
\Log_Q\colon&  \mathcal{D}_Q \to T_QSO(3),& R\mapsto Q\log_m(Q^{\mathrm{T}}R),
\end{align*}
where $\exp_m$ and $\log_m$ denote the classical matrix exponential and logarithm functions, see \cite{Higham:2008}, and
$\mathcal{D}_Q\subset SO(3)$ is a domain around $Q$ such that for all $R\in \mathcal{D}_Q$, the orthogonal matrix $Q^{\mathrm{T}}R$ does not feature $\lambda = -1$ as an eigenvalue.
As a test function, we consider
\begin{align*}
    f\colon &[a,b]^2\to SO(3), \quad (\omega_1,\omega_2) \mapsto 
    \exp_m X(\omega_1, \omega_2), \text{ where}\\
    &\quad X(\omega_1, \omega_2) = 
    \begin{pmatrix}[ccc]
       0                                  & \omega_1^2 + \frac12\omega_2 & \sin\bigl(4\pi(\omega_1^2+\omega_2^2)\bigr)
       \\[.66em]
      -\omega_1^2 - \frac12\omega_2       &      0                       & \omega_1 + \omega_2^2
      \\[.66em]
      -\sin(4\pi(\omega_1^2+\omega_2^2)) & -\omega_1 - \omega_2^2       & 0
    \end{pmatrix}.
\end{align*}
The sample locations $P_j=\exp_m X(\omega^j)$ at $\omega^j=(\omega_1^j, \omega_2^j)$ and the corresponding partial derivatives $V^i_j =\deriv[t]\big\vert_{t=0} \exp_m(X(\omega^j+t e_i))= \diff{}(\exp_m)(X(\omega^j))[\partial_i X(\omega^j)] $, $ i=1,2$ of the test function can be obtained 
by Mathias' theorem, see \cite[Thm. 3.6]{Higham:2008}:
\begin{equation*}
    \exp_m\begin{pmatrix}[cc]
      X(\omega^j) & \partial_i X(\omega^j)
     \\[.66em]
      0         &  X(\omega^j)
    \end{pmatrix}
    =
    \begin{pmatrix}[cc]
      \exp_m(X(\omega^j)) & \diff{}(\exp_m)(X(\omega^j))[\partial_i X(\omega^j)]
     \\[.66em]
      0         &  \exp_m(X(\omega^j))
    \end{pmatrix}.
\end{equation*}
For BHI, the partial derivatives of the interpolation weight functions $\varphi_j$ are computed with Algorithm \ref{alg:partial_derivs},
where we use the vectorized tangent matrices $\text{vec}(\Log_{P_l}(P_j))\in\R^{3\cdot 3}$ as columns to form the matrices $X_l$. The input $V_l^i$ on the right hand side of the equation system in step 5 of Algorithm \ref{alg:partial_derivs} is vectorized accordingly. For BHI, for each trial location $\omega^*$, the interpolant $\hat f(\omega^*)$ is computed with Algorithm \ref{alg:BaryInt}, while for THI, it is computed according to \eqref{eq:tanginterp}
and \eqref{eq:THI_interpolant}. FOR THI, the Riemannian barycenter of the sample data set is used as the tangent space base point.\\
For quantifying the accuracy of the interpolation, we compute the relative errors 
$\frac{\lVert f(\omega) - \hat f(\omega)\rVert_F}{\sqrt{3}}$. Mind that $\lVert Q\rVert_F= \sqrt{3}$ for any matrix $Q\in SO(3)$.
\begin{table}[tbp]
	\begin{small}
    \centering
    \begin{tabular}{cccccc}\toprule
    \multicolumn{6}{c}{\emph{Parameter settings}}\\
     Manifold               & \#{variables}              & domain                  & \#{samples} & threshold                & line step\\
     $SO(3)$                  & $d=2$                     & $[0.5, 0.5]^2$ &  $k=49$ (Cheby.)          & $\tau =1.0\cdot 10^{-6}$       & $\alpha = 1.0$\\[\baselineskip]
    \midrule
    & \\
     \midrule
    \multicolumn{6}{c}{\emph{Results: barycentric Hermite interpolation (BHI)}}\\
      \multicolumn{2}{c}{Wall clock time} & \multicolumn{2}{c}{Interpolation error} & \multicolumn{2}{c}{FD error}\\
      \cmidrule(r){1-2}\cmidrule(lr){3-4}\cmidrule(l){5-6}
     offline & online & max & avg & avg.~$\partial_1 \hat f(\omega^j) $ & avg.~$\partial_2 \hat f(\omega^j)$\\
     $0.41$s & $0.077$s                    &  $0.029$          &$0.0069$ & $4.45\cdot 10^{-4}$           & $4.91\cdot 10^{-4}$\\
       \midrule
    \multicolumn{6}{c}{\emph{Results: tangent space Hermite interpolation (THI)}}\\
      \multicolumn{2}{c}{Wall clock time} & \multicolumn{2}{c}{Interpolation error} & \multicolumn{2}{c}{FD error}\\
      \cmidrule(r){1-2}\cmidrule(lr){3-4}\cmidrule(l){5-6}
     offline & online & max & avg & avg.~$\partial_1 \hat f(\omega^j) $ & avg.~$\partial_2 \hat f(\omega^j)$\\
     $0.73$s & $0.0023$s                    &  $0.027$          &$0.0065$ & $3.9\cdot 10^{-4}$           & $4.6\cdot 10^{-4}$\\
    \bottomrule
    \end{tabular}
    \caption{Parameter settings and interpolation results for the experiment of Subsection \ref{sec:numex_SO3}
    associated with Figure \ref{fig:Error_SO3_both}.
    The `offline' time refers to the construction of the interpolant from the sampled data, while the `online' time accounts for querying the interpolant at a trial location.}
    \label{tab:numex_SO3}
    \end{small}
\end{table}
%
%
%
%
%
%
%
%
%
%
%
%
In this experiment, we rely on a two-dimensional Chebychev sample plan
\begin{equation*}
    \left\{\frac12 (b-a)\cos\left(\frac{(2j-1)\pi}{2k}\right) + \frac12(b+a)\mid j=1,\ldots, k\right\}^2\subset \R^2.
\end{equation*}
\begin{figure}[tbp]
    \centering
    \includegraphics[width=.4\textwidth]{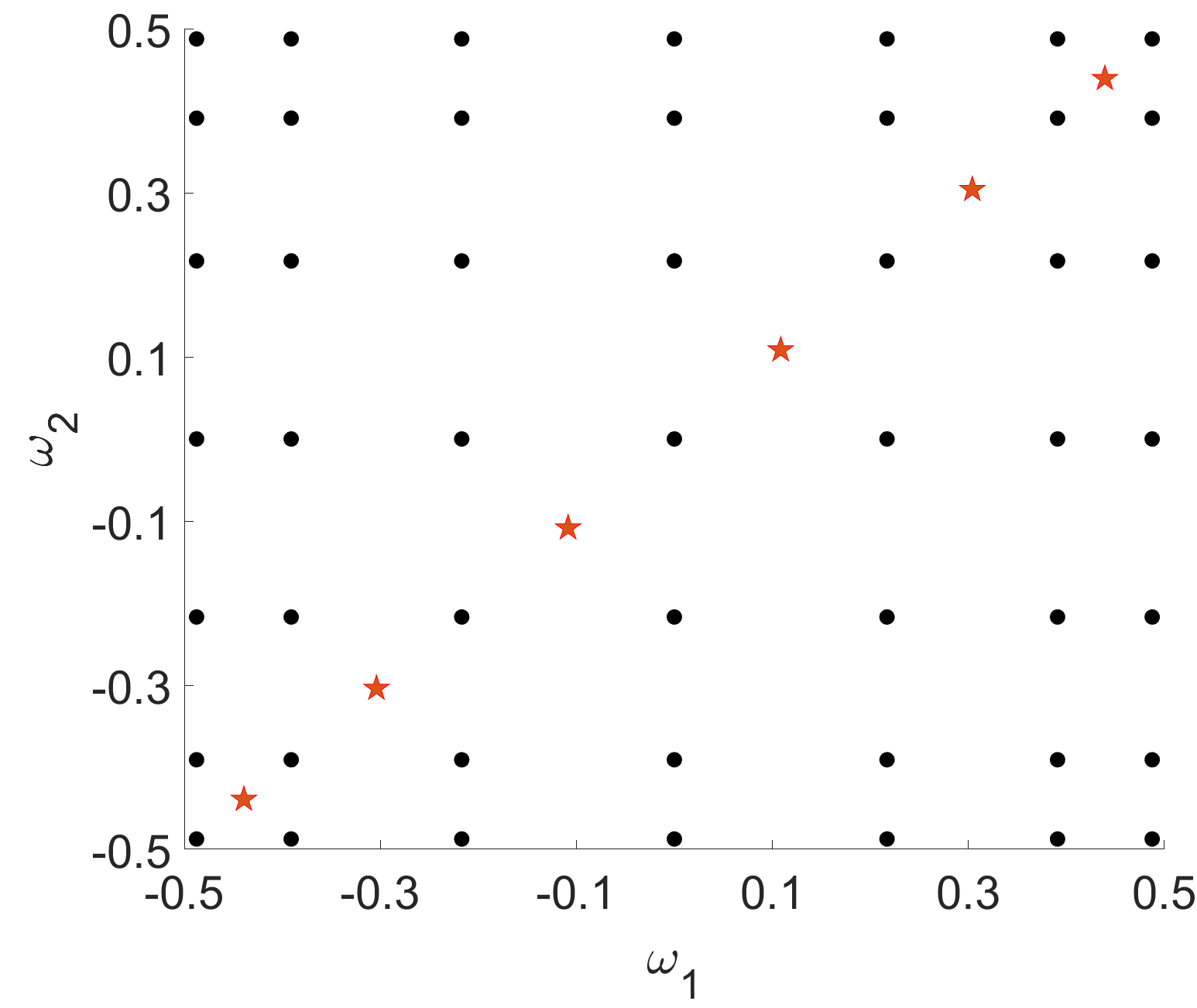}
    \caption{Sample plan associated with the experiment of Section \ref{sec:numex_SO3}. 
    Black dots: Chebychev $7\times7$ grid on the domain $[-0.5, 0.5]^2$. The stars indicate the trial locations that are used for visualization purposes in the upcoming Figure \ref{fig:t-pots}.}
    \label{fig:chebysampling}
\end{figure}
We sample the function values and partial derivatives of $f$ on $k=49$ Chebychev sample locations $(\omega^j_1,\omega^j_2)\in [a,b]^2 = \left[-0.5, 0.5\right]^2$, $j=1,\ldots,49$, see  \Cref{fig:chebysampling}.
The numerical gradient descent method of Algorithm \ref{alg:BaryInt} is conducted with a fixed step size of $\alpha =1.0$ and a numerical convergence threshold of $\tau = 1.0\cdot 10^{-6}$.
The BHI and THI interpolants are evaluated on a uniform grid of $76\times 76$ trial points in the same domain.
The corresponding interpolation error surfaces are displayed in 
\Cref{fig:Error_SO3_both}.
\Cref{tab:numex_SO3} summarizes the parameter settings as well as the timing and accuracy results.\\
In the test case at hand, BHI and THI produce interpolants of roughly the same averaged accuracy, with THI performing slightly better than BHI.
To the naked eye, the error surfaces look identical for both approaches.
A finite difference check shows again that both methods provide interpolants that meet the sampled tangent vectors.
The computation time for the offline stage under the THI approach is roughly $2$ times higher than that for BHI.
In the online stage of BHI, the basic gradient descent converges after an average number of $18.0$ iterations with the worst case taking $87$ iterations.
On average, for the test case at hand, THI is online $33$ times faster than BHI. 
We emphasize again that the timing results are expected to vary considerably depending on which optimization algorithm is employed.

\begin{figure}[tpb]
    \centering
    \includegraphics[width=1.0\textwidth]{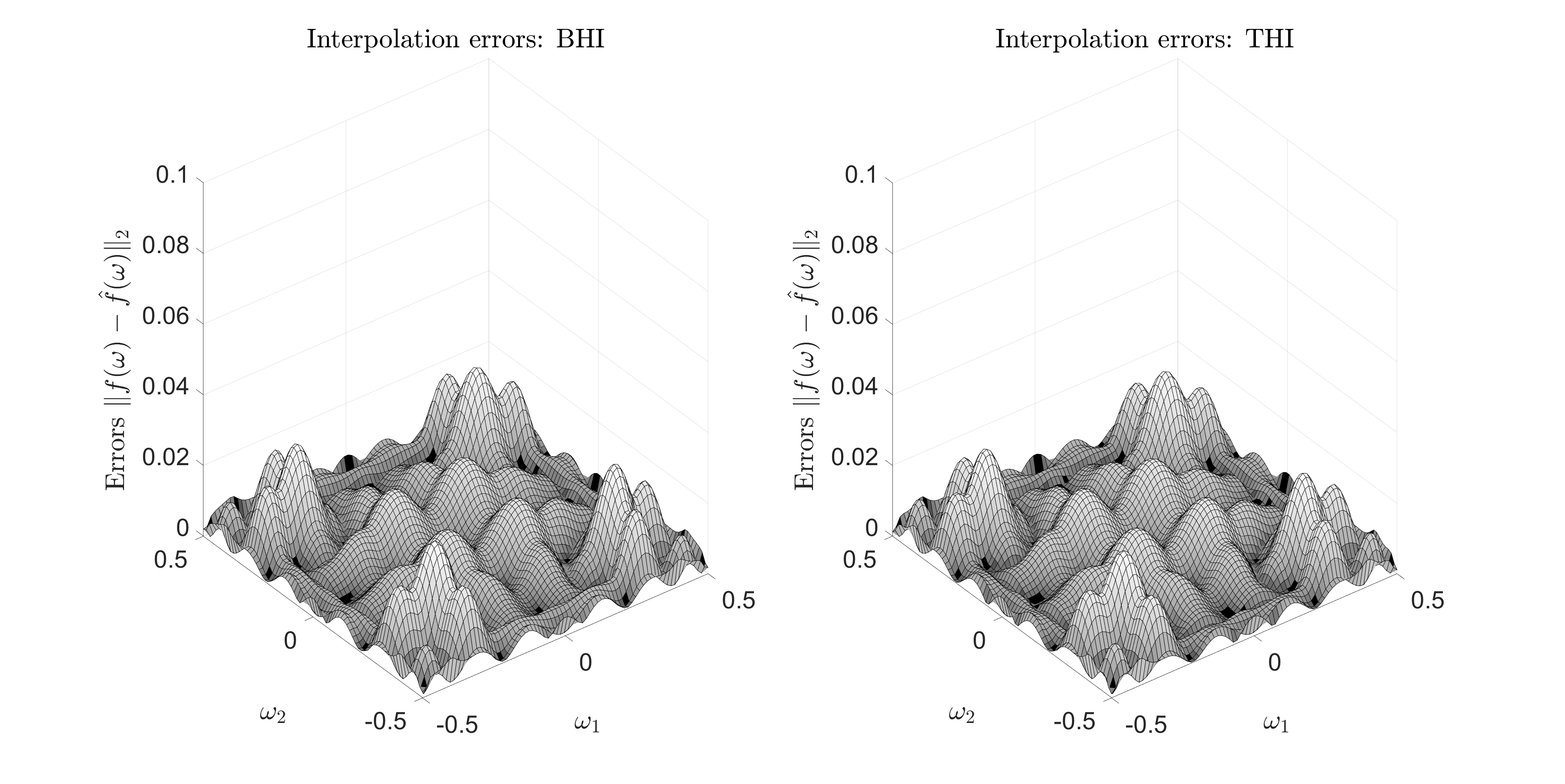}
    \caption{ 
    Error surfaces for $SO(3)$-interpolation on a Chebychev $7\times 7$ grid from Subsection \ref{sec:numex_SO3}. Left: Barycentric Hermite Interpolation (BHI).  Right: Tangent Space Hermite Interpolation (THI).}
    \label{fig:Error_SO3_both}
\end{figure}
In this test case, each interpolated value $\hat P  = \hat f(\omega)$ is a matrix in $SO(3)$. In order to further visualize the interpolation results,
\Cref{fig:Coords_SO3_cheby} displays the interpolated matrix component functions
$\omega\mapsto \hat P_{11}$, $\omega\mapsto \hat P_{22}$, $\omega\mapsto \hat P_{33}$, and $\omega\mapsto \hat P_{31}$.
Because the plots for THI and BHI virtually coincide, only the results for BHI are shown.
\begin{figure}[tbp]
    \centering
    \includegraphics[width=1.0\textwidth]{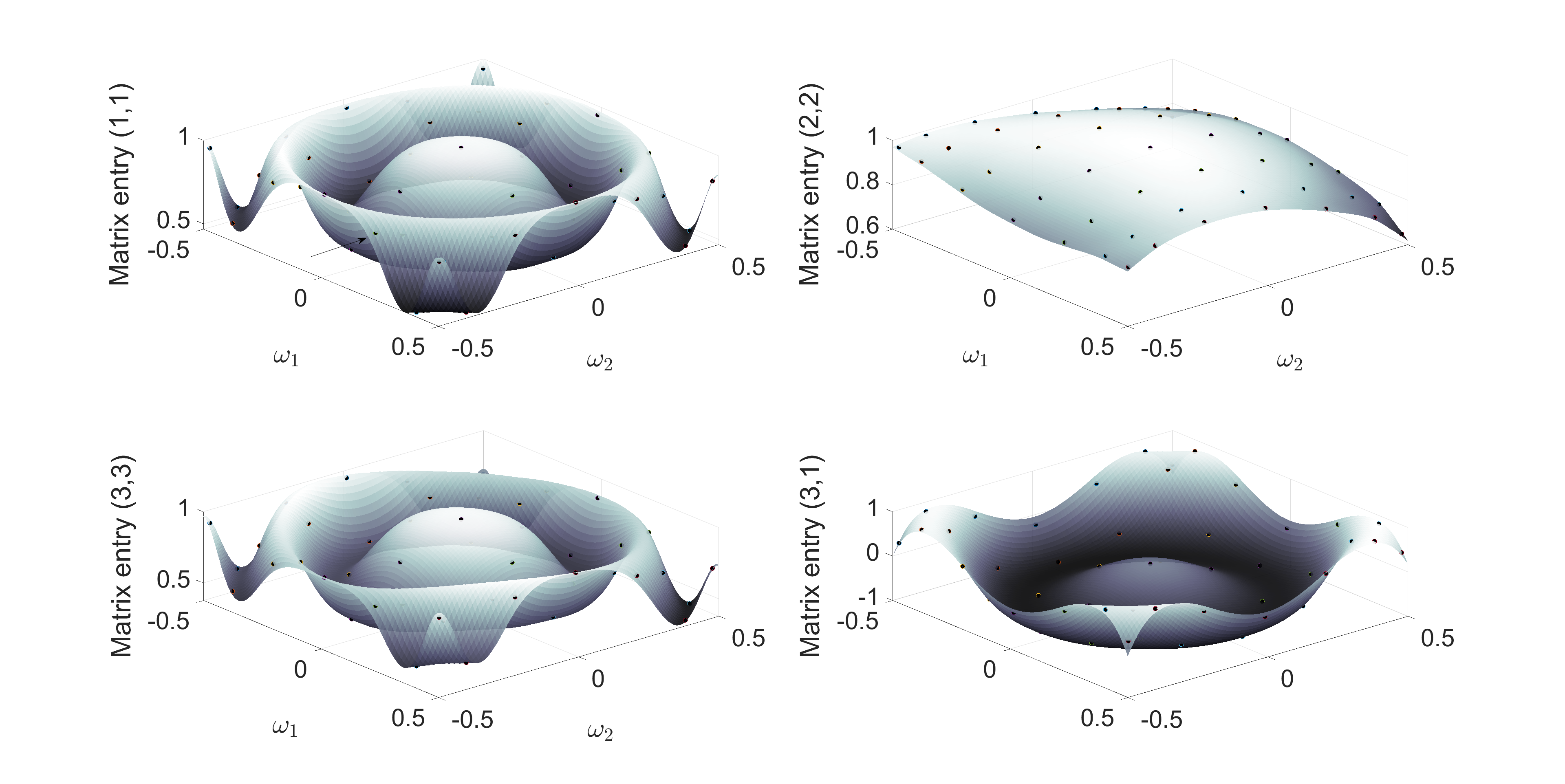}
    \caption{Plots of some selected interpolated matrix component functions 
    $(\omega_1, \omega_2) \to \left(\hat f(\omega_1, \omega_2)\right)_{i,j}\in \R$ of the experiment from Subsection \ref{sec:numex_SO3} . The black dots indicate the Chebychev $7\times 7$ sample grid.}
    \label{fig:Coords_SO3_cheby}
\end{figure}
The first diagonal component of the interpolant is juxtaposed with the corresponding component of the reference function in \Cref{fig:Coord11_SO3_cheby}.\\
\begin{figure}[tbp]
    \centering
    \includegraphics[width=0.7\textwidth]{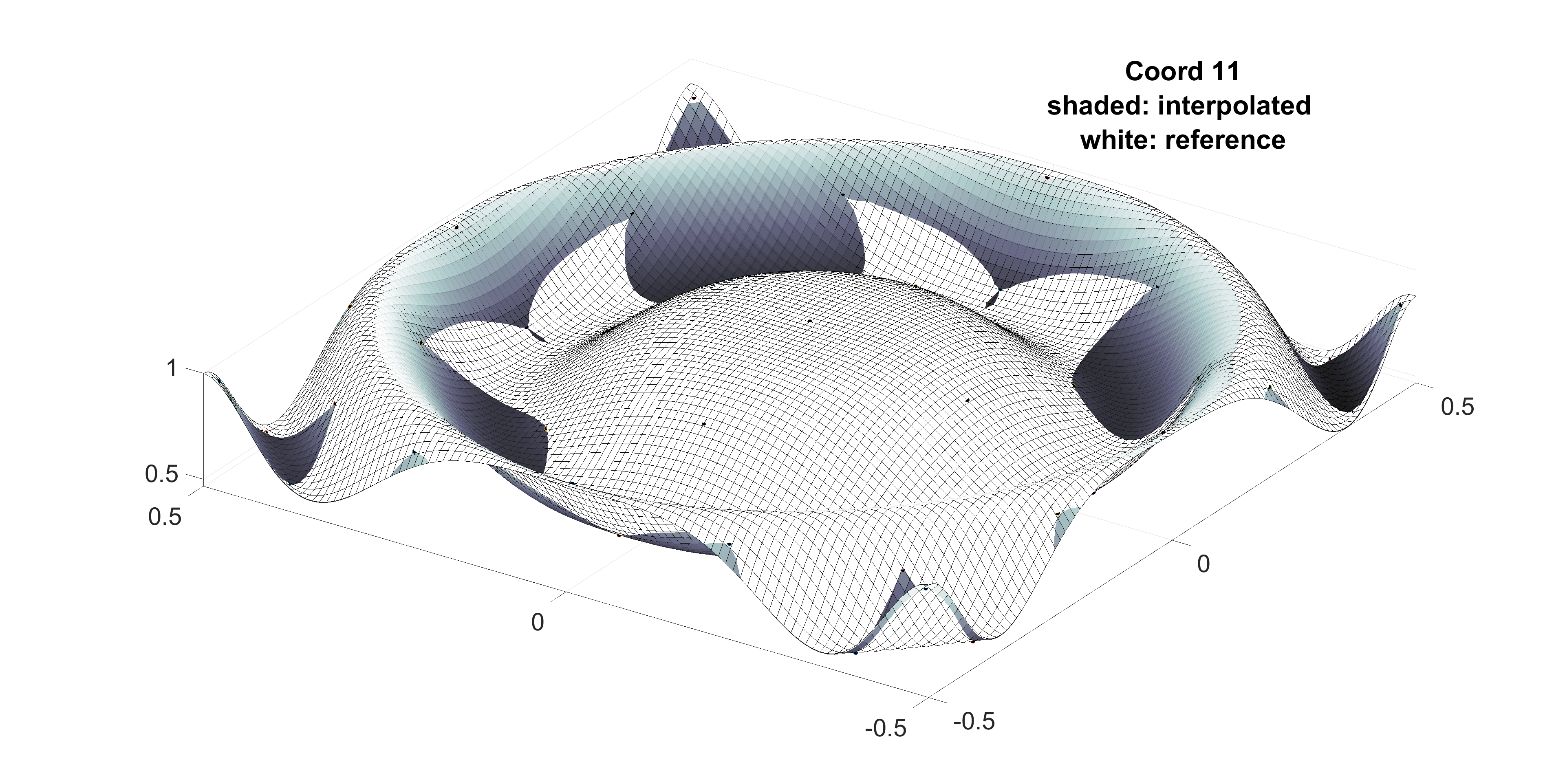}
    \caption{Interpolated matrix component function $\hat P_{11}=(\hat f(\omega))_{11}$ (shaded surface)
    and the reference matrix component  $P_{11}=f(\omega)$ (white surface) together with the sample locations on a Chebychev $7\times 7$ grid from Subsection \ref{sec:numex_SO3}.}
    \label{fig:Coord11_SO3_cheby}
\end{figure}
Matrices in $SO(3)$ induce rotations in the three-dimensional Euclidean space.
Such rotations may be visualized by their action on an object.
In \Cref{fig:t-pots}, we compare the Hermite  $SO(3)$-interpolant and the reference $SO(3)$-function via showing their actions on a tea pot object.
Both the interpolant and the reference function are queried at the parameter locations 
that are marked with a star in the sampling plan displayed in \Cref{fig:chebysampling}.
\begin{figure}[tpb]
    \centering
    \includegraphics[width=0.75\textwidth]{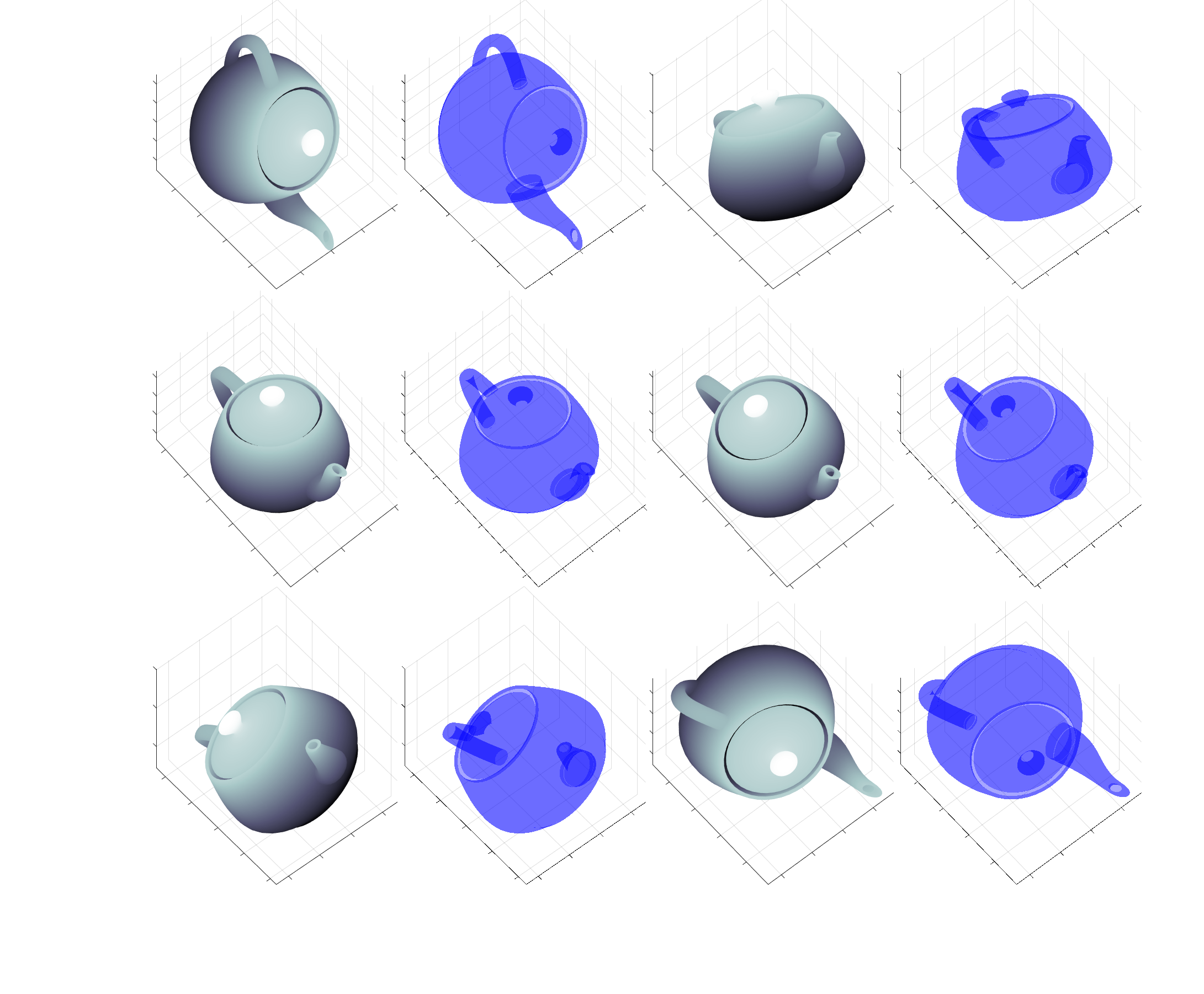}
    \caption{(from upper left to lower right): reference rotations (gray) and interpolated $SO(3)$-matrices (blue) at the 6 trial points displayed in Fig. \ref{fig:chebysampling}. The rotation matrices are visualized via their action on the tea pot object.}
    \label{fig:t-pots}
\end{figure}
The interpolated rotations visually agree very well with the reference rotations.
\section{Conclusions and discussion}
\label{sec:conclusions}
We have developed two approaches for multivariate Hermite interpolation of manifold-valued data sets.
On the one hand, Hermite interpolation can be achieved by computing correspondingly weighted Riemannian barycenters, referred to as \emph{barycentric Hermite interpolation} (BHI).
The method we presented should be seen as a generic framework for BHI.
For any practical application, the user has to
\begin{itemize}
    \item construct a sample plan, i.e.\ employ a method to select the sample points $\omega_j \in \R^d$ (design of experiment).
    \item choose a Riemannian optimization method to compute the weighted Riemannian center of mass in \eqref{eq:baryInterp}.
    (Gradient descend, Gauss-Newton-type methods, nonlinear conjugate gradients, etc.)
    \item choose interpolation weight functions $\omega \to \varphi_j(\omega)$ with the required properties.
    Note that the $\varphi_j$'s are scalar functions on a Euclidean domain so that this subtask does not require any Riemannian considerations.
    \item specify side constraints to obtain unique, smooth solutions to the underdetermined linear system in step 5 of Algorithm \ref{alg:partial_derivs}.
\end{itemize}
On the other hand, Hermite interpolation can be achieved by translating any classical approach of Hermite interpolation from Euclidean vector spaces to the tangent space of the manifold under consideration, referred to as \emph{tangent space Hermite interpolation} (THI). This requires to single out a manifold location and use it as the center for the tangent space. 
Moreover, all data, i.e., locations and tangent vectors have to be moved to the same tangent space. 
As a consequence, the interpolant depends on the base-point selection and the preprocessing stage is more involved.
Eventually, the interpolant is written as a weighted linear combination of the tangent space images of the sample points and the transported, sampled tangent space vectors. This entails that the number of weight functions in the THI approach equals the number of sample locations plus the total number of sampled partial derivatives. In a sense, THI can be considered a `brute-force' approach. Data is forced into a `Procrustean bed' (=the selected tangent space) and processed without regard to its nature.

From an aesthetic view point, the BHI method is the more appealing one: It is base-point independent and works with a number of weight coefficients that equals the number of sample points. Plus, it treats sample locations and tangent vectors, which in fact are incompatible entities, differently.
Yet, in the numerical experiments, the THI method proved to produce the more accurate results and the computational effort of querying  the interpolant 
in the online stage is much lower than for BHI, because the latter involves solving a Riemannian optimization problem at every trial point.
\section*{Acknowledgements}
The original idea for this research paper stems from a short-term visit of the first author to the "Numerical Algorithms and High-Per\-for\-mance Computing"-group at EPF Lausanne in Spring 2022.
The first author would like to thank the head of group Daniel Kressner and the doctoral assistant Axel E. J. S\'eguin
for their hospitality and for the very stimulating discussions during that visit.
%
%

%
\appendix
\section*{Appendix}
\section{Barycentric Hermite interpolation: The Euclidean case}
\label{app:EuclideanCase}
To ease the transition to the manifold case of barycentric interpolation, in this section, we outline multivariate, gradient-en\-han\-ced barycentric interpolation in Euclidean spaces.

Let $f\colon\R^d\to \R^m$ be differentiable and suppose that we have sampled data
$f(w_j) = p_j\in \R^m$, $j=1,\ldots,k$ and partial derivatives $\partial_i f(w_j) = v_j^i\in T_{p_j}\R^m\simeq \R^m$, $j=1,\ldots,k$, $i=1,\ldots,d$.
The task is to construct an interpolant $\hat f$ such that
 \begin{align}
 \label{eq:basicHermiteEuc}
 \hat f(w_j) &= p_j\in \R^m,  \quad j =1,\ldots,k\\
  \partial_i \hat f(w_j) &= v_j^i\in T_{p_j}\R^m,\simeq \R^m,  \quad j=1,\ldots,k; \hspace{0.2cm}  i=1,\ldots,d.
\end{align}
We approach this task by constructing weighted barycenters
\begin{equation}
\label{eq:baryEuc}
\hat f(\omega) = \text{arg}\min_{q\in\R^m} \frac12 \sum_{j=1}^k \varphi_j(\omega) \lVert q-p_j\rVert^2_2 \coloneqq \text{arg}\min_{q\in\R^m} L(q,\omega).
\end{equation}
Here, the $\varphi_j$, $j=1,\ldots,k$ are weight functions with the following properties
\begin{equation*}
 \varphi_j(\omega_i) = \delta_{ij}, \quad 
 \sum_{j=1}^k \varphi(\omega) \equiv 1.
\end{equation*}
The objective function $L(q,\omega)$ in \eqref{eq:baryEuc} is a weighted sum over squared distance terms
$\frac12 \dist(q,p_j)^2 = \frac12\lVert p_j -q\rVert_2^2 =: L_{p_j}(q)$.
The gradient is $\grad L_{p_j}(q) = (q - p_j)$. 
We emphasize that this is exactly minus the Riemannian logarithm $\Log_q(p_j) = p_j-q$ on the Euclidean $\R^m$. 
The minimizer of \eqref{eq:baryEuc}
is the unique zero of the gradient equation
\begin{equation}
\label{eq:grad0Euc}	
  0 = \grad_q L(q, \omega) = \sum_{j=1}^k \varphi_j(\omega)\grad L_{p_j}(q) = \sum_{j=1}^k \varphi_j(\omega) (q-p_j). 
\end{equation}
Because of $\sum_{j=1}^k \varphi_j(\omega) \equiv 1$, we obtain the interpolant $q=\hat f(\omega)$ as 
\begin{equation}
\label{eq:interpolant_linear_in_data}
	\hat f(\omega) = \sum_{j=1}^k \varphi_j(\omega) p_j.
\end{equation}
\begin{remark}
The interpolant in \eqref{eq:interpolant_linear_in_data} is a weighted linear combination of the sample data points
and can thus be constructed in the same fashion in arbitrary vector spaces.
Lagrange interpolation, radial basis function interpolation and Gaussian process regression/Kriging eventually boil down to the form of \eqref{eq:interpolant_linear_in_data} and differ only by the choice of the weight functions $\omega \to \varphi_j(\omega)$.
Hence, all these methods may be considered as examples of weighted barycentric interpolation approaches.
\end{remark}
\paragraph{Parameterized solutions via the implicit function theorem}
Introduce 
\begin{equation*} 
  G\colon\R^m \times \R^d \to \R^m, \quad (q, \omega) \mapsto G(q,\omega) = \grad_q L(q,\omega) = \sum_{j=1}^k \varphi_j(\omega) (q-p_j).
\end{equation*}
Suppose that $G$ has a root at $(q^*, \omega^*)$. The differential of $G$ by $q$ at $(q^*, \omega^*)$ is 
\begin{equation*}
	\Diff_q G(q^*, \omega^*) = \sum_{j=1}^k \varphi_j(\omega^*) I = I,
\end{equation*}
and is invertible. By the implicit function theorem, there exists 
$\hat f\colon \R^d\to \R^m, \omega\mapsto \hat f(\omega)$ such that on a suitably small domain around $\omega^*$, it holds $G(\hat f(\omega),\omega) \equiv 0$.
Implicit differentiation yields (locally) 
\begin{equation*}
	\Diff_q G(\hat f(\omega), \omega) \circ D\hat f(\omega) + \Diff_\omega G(\hat f(\omega), \omega) =0.
\end{equation*}
Note that $\Diff_q G(\hat f(\omega), \omega) = \Diff_q \grad_q L(\hat f(\omega), \omega)=\Hess_q L(\hat f(\omega),\omega)$.
Hence, we obtain the following equation that relates the partial derivatives of $\hat f$, which acts as the interpolant, to those of the weight functions $\varphi_j$. 
\begin{equation}
\label{eq:implderivEuc}
	\Hess_q L(\hat f(\omega),\omega) [\partial_i \hat f(\omega)]
	= -\sum_{j=1}^k {\partial_i \varphi_k(\omega)(\hat f(\omega)-p_j)}
	= \sum_{j=1}^k {\partial_i \varphi_k(\omega)\Log_{\hat f(\omega)}(p_j)}.
\end{equation}
It holds $\Hess_q L(\hat f(\omega),\omega) = I$ so  that
\begin{equation*}
	\partial_i \hat f(\omega) =
	\begin{pmatrix}[ccc]
	\Log_{\hat f(\omega)}(p_1) & \cdots &\Log_{\hat f(\omega)}(p_k) 
	\end{pmatrix}
	\begin{pmatrix}
	  \partial_i \varphi_1(\omega)\\
	  \vdots\\
	  \partial_i \varphi_k(\omega)
	\end{pmatrix}.
\end{equation*}
If the partial derivatives $\partial_i \hat f(\omega_j)$, $i=1,\ldots,d$ are known at the sample sites $\omega_j, j=1,\ldots,k$, then the above equation imposes conditions
on the partial derivatives of the weight functions thereat.
At sample location $\omega_l$, it holds $\hat f(\omega_l) = p_l$ and
\begin{equation}
\label{eq:partialbaryHermiteEuc}
	\partial_i \hat f(\omega_l)
	= -\sum_{j=1, j\neq l}^k {\partial_i \varphi_j(\omega_l)(p_l-p_j)}
	= \sum_{j=1, j\neq l}^k {\partial_i \varphi_j(\omega_l)\Log_{p_l}(p_j)}.
\end{equation}
In order to be able to match any prescribed, sampled partial derivatives, 
$\partial_i \hat f(\omega_l)$, we require the sets
\begin{equation*}
	\mathcal L_l \coloneqq \{p_j - p_l\mid j=1,\ldots,k, j\neq l\} = \{\Log_{p_l}(p_j)\mid j\neq l\}
	\subset T_{p_l}\R^m\simeq \R^m
\end{equation*}
to span the full tangent space $T_{p_l}\R^m\simeq \R^m$.
As a consequence, at least $k\geq m+1$ sample points must be available to make the barycentric Hermite interpolation problem well-defined.
If $k>m+1$, the equation system \eqref{eq:partialbaryHermiteEuc} is underdetermined and infinitely many valid choices of partial derivative values $\partial_i \varphi_j(\omega_l)$ exist that allow to represent the sampled partial derivatives.
\section{Karcher's computation of the gradient and the Hessian of the Riemannian distance function}
\label{app:proofs}
To make this paper self-contained, we recap the proof of Karcher's Theorem, \Cref{thm:grad_dist}.
To this end, we first list two standard facts about covariant derivatives.
\begin{lemma}[Product rule, {\cite[Chapter 2, Prop. 3.2]{DoCarmo:1992}}]
 \label{lem:productrule_codiff_along_c}
 Let $\covDeriv$ denote  the Le\-vi-Ci\-vi\-ta connection along a curve $c:I\to \mcM$. It holds
 \begin{equation*}
  \deriv[t] \langle X(t), Y(t)\rangle_{c(t)}
  = \langle \frac{\mathrm{D}X}{\mathrm{d}t} (t), Y(t)\rangle_{c(t)} +
   \langle X(t), \frac{\mathrm{D}Y}{\mathrm{d}t}(t)\rangle_{c(t)} 
 \end{equation*}
 for all smooth vector fields $X,Y$ along $c$.
\end{lemma}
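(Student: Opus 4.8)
The plan is to recognize this lemma as nothing but the restatement, along the curve $c$, of the metric compatibility of the Levi-Civita connection, $\nabla g = 0$. Since the claimed identity is pointwise in $t$, it suffices to verify it on a small interval around an arbitrary $t_0 \in I$, which lets me work with locally defined data.

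First I would choose a local frame: a neighborhood $U$ of $c(t_0)$ in $\mcM$ together with smooth vector fields $E_1,\ldots,E_m \in \mathcal{X}(U)$ that form a basis of $T_p\mcM$ at every $p \in U$ (a coordinate frame, or, as in the proof of Lemma \ref{lem:Hessefullrank4implicit}, an orthonormal one). Shrinking $I$ so that $c(I)\subset U$, I would expand the vector fields along $c$ as $X(t) = \sum_i a^i(t) E_i(c(t))$ and $Y(t) = \sum_j b^j(t) E_j(c(t))$ with smooth scalar coefficients, and abbreviate $g_{ij}(t) := \langle E_i(c(t)), E_j(c(t))\rangle_{c(t)}$.

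Then I would compute both sides and compare. On the left, the ordinary product and chain rules give $\frac{\mathrm{d}}{\mathrm{d}t}\langle X,Y\rangle = \sum_{i,j}(\dot a^i b^j + a^i \dot b^j) g_{ij} + \sum_{i,j} a^i b^j \dot g_{ij}$. On the right, the defining properties of the induced covariant derivative along $c$ — $\R$-linearity, the Leibniz rule $\frac{\mathrm{D}}{\mathrm{d}t}(aX) = \dot a X + a\,\frac{\mathrm{D}X}{\mathrm{d}t}$, and $\frac{\mathrm{D}}{\mathrm{d}t}(E_i\circ c) = \nabla_{\dot c}E_i$ because each $E_i$ is an ambient field — yield $\frac{\mathrm{D}X}{\mathrm{d}t} = \sum_i(\dot a^i E_i + a^i \nabla_{\dot c}E_i)$ along $c$, and likewise for $Y$. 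Pairing and collecting terms gives $\langle \frac{\mathrm{D}X}{\mathrm{d}t}, Y\rangle + \langle X, \frac{\mathrm{D}Y}{\mathrm{d}t}\rangle = \sum_{i,j}(\dot a^i b^j + a^i \dot b^j) g_{ij} + \sum_{i,j} a^i b^j(\langle \nabla_{\dot c}E_i, E_j\rangle + \langle E_i, \nabla_{\dot c}E_j\rangle)$. Subtracting, the lemma reduces exactly to $\dot g_{ij} = \langle \nabla_{\dot c}E_i, E_j\rangle + \langle E_i, \nabla_{\dot c}E_j\rangle$, i.e. to $\frac{\mathrm{d}}{\mathrm{d}t}\langle E_i, E_j\rangle_c = \langle \nabla_{\dot c}E_i, E_j\rangle + \langle E_i, \nabla_{\dot c}E_j\rangle$, which is precisely the metric-compatibility axiom of the Levi-Civita connection applied to the ambient fields $E_i, E_j$ restricted to $c$.

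The only genuine subtlety — and the step I expect to require the most care — is the bookkeeping around well-definedness: that $\frac{\mathrm{D}X}{\mathrm{d}t}(t)$ depends only on $\dot c(t)$ and the germ of $X$ along $c$, so that the frame expansion is legitimate and frame-independent, and that the case of a general vector field along $c$ (which need not extend to an ambient field) really does reduce, via the frame, to the metric-compatibility statement for the $E_i$. An alternative route is to first establish the special case $\frac{\mathrm{d}}{\mathrm{d}t}\langle X, X\rangle = 2\langle \frac{\mathrm{D}X}{\mathrm{d}t}, X\rangle$ and recover the general identity by polarization, but this merely swaps the frame bookkeeping for a routine symmetry/bilinearity argument. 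Since do Carmo's text is cited, it would in any case be fully acceptable to refer to \cite[Chapter 2, Prop. 3.2]{DoCarmo:1992} and omit the computation.
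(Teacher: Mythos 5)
Your proposal is correct. The paper does not actually prove this lemma---it is quoted verbatim from the cited reference \cite[Chapter 2, Prop.\ 3.2]{DoCarmo:1992} and used as a black box in \Cref{app:proofs}---and your frame-expansion argument, reducing the identity to the metric-compatibility axiom $\dot g_{ij} = \langle \nabla_{\dot c}E_i, E_j\rangle + \langle E_i, \nabla_{\dot c}E_j\rangle$ via the Leibniz rule for the induced derivative along $c$, is precisely the standard proof given in that reference, so the two approaches coincide.
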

The next statement corresponds to the Theorem of Schwarz in classical multivariate calculus.
 \begin{lemma}[{\cite[Chapter 3, Lemma 3.4]{DoCarmo:1992}}]
 \label{lem:schwarz4codiff}
  Let $\alpha\colon I\times J \rightarrow \mcM, (s,t)\mapsto \alpha(s,t)$
  be smooth.
  Note that for $s$ fixed, $t\mapsto \alpha(s,t)$ is a smooth manifold curve,
  likewise for $t$ fixed.
  Hence,
  \begin{align*}
   \frac{\partial \alpha}{\partial s} (s,t) &= \deriv[\sigma]\big\vert_{\sigma = 0}\alpha(s+\sigma,t) \in T_{\alpha(s,t)}\mcM,\\
   \frac{\partial \alpha}{\partial t} (s,t) &= \deriv[\tau]\big\vert_{\tau = 0}\alpha(s,t+\tau)  \in T_{\alpha(s,t)}\mcM,\\
  \end{align*}
  and it holds
  \begin{equation*}
   \frac{\mathrm{D}}{\partial t} \frac{\partial\alpha}{\partial s}(s,t) =
   \frac{\mathrm{D}}{\partial s} \frac{\partial\alpha}{\partial t} (s,t).
  \end{equation*}
  Here,
  \begin{equation*}
    \frac{\mathrm{D}}{\partial t} \frac{\partial\alpha}{\partial s}(s,t) \coloneqq 
    \covDeriv[\tau]\big\vert_{\tau = 0} 
    \tau \mapsto \alpha(s,t+\tau).
  \end{equation*}
  Likewise for $\frac{\mathrm{D}}{\partial s} \frac{\partial\alpha}{\partial t} (s,t)$.
 \end{lemma}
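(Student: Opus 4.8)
The plan is to reduce the assertion to the classical symmetry of mixed second partial derivatives together with the torsion-freeness (symmetry) of the Levi-Civita connection, by passing to local coordinates. Since the claimed identity is purely local --- it involves only the values of $\alpha$ and its derivatives near a fixed point $(s_0,t_0)\in I\times J$ --- I would fix such a point, choose a coordinate chart $(x^1,\dots,x^m)$ of $\mcM$ around $\alpha(s_0,t_0)$, and write the components of $\alpha$ as smooth real-valued functions $\alpha^k(s,t)$, $k=1,\dots,m$, on a neighbourhood of $(s_0,t_0)$. In these coordinates the coordinate vector fields $\tfrac{\partial}{\partial x^k}$ furnish, at each point of the chart, a basis of the tangent space, so that $\tfrac{\partial\alpha}{\partial s} = \sum_k \tfrac{\partial\alpha^k}{\partial s}\,\tfrac{\partial}{\partial x^k}$ and likewise for $\tfrac{\partial\alpha}{\partial t}$.

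Next I would invoke the coordinate expression for the covariant derivative of a vector field $V(s,t)=\sum_k V^k(s,t)\,\tfrac{\partial}{\partial x^k}$ along the $t$-curves, namely
\begin{equation*}
  \frac{\mathrm{D}V}{\partial t} = \sum_k\Bigl(\frac{\partial V^k}{\partial t} + \sum_{i,j}\Gamma^k_{ij}\,\frac{\partial\alpha^i}{\partial t}\,V^j\Bigr)\frac{\partial}{\partial x^k},
\end{equation*}
where $\Gamma^k_{ij}$ are the Christoffel symbols of $\nabla$ in the chart; this formula is itself a standard consequence of the defining properties of a connection (see, e.g., \cite[Chapter 2]{DoCarmo:1992}). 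Applying it with $V=\tfrac{\partial\alpha}{\partial s}$, i.e.\ $V^j=\tfrac{\partial\alpha^j}{\partial s}$, gives
\begin{equation*}
  \frac{\mathrm{D}}{\partial t}\frac{\partial\alpha}{\partial s} = \sum_k\Bigl(\frac{\partial^2\alpha^k}{\partial t\,\partial s} + \sum_{i,j}\Gamma^k_{ij}\,\frac{\partial\alpha^i}{\partial t}\,\frac{\partial\alpha^j}{\partial s}\Bigr)\frac{\partial}{\partial x^k},
\end{equation*}
and the analogous formula for $\tfrac{\mathrm{D}}{\partial s}\tfrac{\partial\alpha}{\partial t}$ follows by exchanging the roles of $s$ and $t$.

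Finally I would compare the two expressions term by term. The mixed partials agree, $\tfrac{\partial^2\alpha^k}{\partial t\,\partial s}=\tfrac{\partial^2\alpha^k}{\partial s\,\partial t}$, by Schwarz's theorem, since $\alpha$ is smooth; and the sum $\sum_{i,j}\Gamma^k_{ij}\,\tfrac{\partial\alpha^i}{\partial t}\,\tfrac{\partial\alpha^j}{\partial s}$ is unchanged under $s\leftrightarrow t$ because the Levi-Civita connection is symmetric, $\Gamma^k_{ij}=\Gamma^k_{ji}$. Hence the two vector fields coincide at $(s_0,t_0)$, and since $(s_0,t_0)$ was arbitrary the identity holds on all of $I\times J$. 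The only step that genuinely uses the structure of $\nabla$, and hence the main (mild) obstacle, is the symmetry of the Christoffel symbols; everything else is bookkeeping about the coordinate formula for the covariant derivative along a two-parameter map. A coordinate-free variant would try to write the two sides as $\nabla_{\partial_s}\partial_t$ and $\nabla_{\partial_t}\partial_s$ and use $\nabla_XY-\nabla_YX=[X,Y]$ together with $[\partial_s,\partial_t]=0$, but since the relevant vector fields are a priori defined only along the image of $\alpha$ and need not extend to ambient fields on $\mcM$, making this rigorous essentially forces one back into a chart, so I would favour the direct coordinate computation above.
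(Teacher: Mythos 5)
Your proof is correct; note, though, that the paper does not prove this lemma itself but simply imports it from do Carmo (Chapter 3, Lemma 3.4), and your argument — local coordinates, the formula $\frac{\mathrm{D}V}{\partial t}=\sum_k\bigl(\partial_t V^k+\sum_{i,j}\Gamma^k_{ij}\,\partial_t\alpha^i\,V^j\bigr)\partial_{x^k}$, Schwarz for the mixed partials, and $\Gamma^k_{ij}=\Gamma^k_{ji}$ from torsion-freeness — is precisely the standard proof given in that reference. Your closing remark about why the coordinate-free route via $[\partial_s,\partial_t]=0$ is delicate for fields defined only along $\alpha$ is also well taken.
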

\begin{theorem}[{\cite[Thm. 1.2]{Karcher:1977}}]
\label{thm:Karcher1977}
Let $\mcM$ be a complete Riemannian manifold and let $p\in \mcM$ be a fixed point.
Let $B_\rho(p)$ be a geodesic ball around $p_*$ such that the geodesics between any two points inside $B_\rho(p)$ are unique and minimizing.
Define 
\begin{equation*}
	f\colon B_\rho(p)\to \R, \quad q \mapsto \frac12 \dist(q,p)^2.
\end{equation*}
Then
\begin{equation*}
	\grad_q f = -\Log_q(p).
\end{equation*}
\end{theorem}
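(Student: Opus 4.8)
The plan is to compute the differential $\diff f(q)[v]$ directly for an arbitrary tangent vector $v \in T_q\mcM$ and then read off the gradient via the defining relation $\langle \grad_q f, v\rangle_q = \diff f(q)[v]$. First I would pick a geodesic $t \mapsto \gamma(t)$ with $\gamma(0) = q$, $\dot\gamma(0) = v$, and consider the function $t \mapsto f(\gamma(t)) = \tfrac12 \dist(\gamma(t), p)^2$, so that $\diff f(q)[v] = \deriv|_{t=0} f(\gamma(t))$. The natural tool is a variation through geodesics: define $c(s,t) = \Exp_p(s\Log_p(\gamma(t)))$, so that for each fixed $t$ the curve $s \mapsto c(s,t)$ is the unique minimizing geodesic from $p$ to $\gamma(t)$, parametrized on $[0,1]$, and $c(0,t) \equiv p$, $c(1,t) = \gamma(t)$. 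Since this geodesic has constant speed equal to $\dist(p,\gamma(t))$, we have $f(\gamma(t)) = \tfrac12 \langle \partial_s c(s,t), \partial_s c(s,t)\rangle_{c(s,t)}$, and this expression is independent of $s \in [0,1]$, which gives flexibility in where we evaluate it.

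Next I would differentiate in $t$. Using the product rule for the covariant derivative along the curve $t \mapsto c(s,t)$ (Lemma \ref{lem:productrule_codiff_along_c}),
\begin{equation*}
    \deriv[t] f(\gamma(t)) = \Big\langle \covDeriv[t]\partial_s c(s,t),\ \partial_s c(s,t)\Big\rangle_{c(s,t)}.
\end{equation*}
Now apply the symmetry lemma (Lemma \ref{lem:schwarz4codiff}) to swap the covariant derivatives: $\covDeriv[t]\partial_s c = \covDeriv[s]\partial_t c$. Because for each fixed $t$ the curve $s \mapsto c(s,t)$ is a geodesic, $\covDeriv[s]\partial_s c = 0$, so by the metric compatibility of $\nabla$ the inner product $\langle \covDeriv[s]\partial_t c,\ \partial_s c\rangle$ equals $\deriv[s]\langle \partial_t c,\ \partial_s c\rangle$; hence the integrand $\langle \covDeriv[s]\partial_t c,\partial_s c\rangle$, as a function of $s$, has derivative $\deriv[s]\langle\partial_t c,\partial_s c\rangle$ — but more efficiently, since the whole expression $\deriv[t]f(\gamma(t))$ is $s$-independent, I can evaluate it at whichever endpoint is convenient and it suffices to track the boundary terms. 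Integrating $\langle \covDeriv[s]\partial_t c,\ \partial_s c\rangle = \deriv[s]\langle \partial_t c,\ \partial_s c\rangle$ over $s \in [0,1]$ gives
\begin{equation*}
    \deriv[t]\Big|_{t=0} f(\gamma(t)) = \big\langle \partial_t c(1,0),\ \partial_s c(1,0)\big\rangle_q - \big\langle \partial_t c(0,0),\ \partial_s c(0,0)\big\rangle_p.
\end{equation*}

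Finally I would evaluate the two boundary terms. At $s = 0$ we have $c(0,t) \equiv p$, so $\partial_t c(0,0) = 0$ and the second term vanishes. At $s = 1$ we have $c(1,t) = \gamma(t)$, so $\partial_t c(1,0) = \dot\gamma(0) = v$; and $\partial_s c(1,0)$ is the terminal velocity at $\gamma(0) = q$ of the minimizing geodesic from $p$ to $q$, which is exactly $-\Log_q(p)$ (the geodesic from $q$ to $p$ has initial velocity $\Log_q(p)$, so traversed in the opposite direction its velocity at $q$ is $-\Log_q(p)$). Therefore $\diff f(q)[v] = \langle v,\ -\Log_q(p)\rangle_q$ for all $v$, and by the definition of the gradient $\grad_q f = -\Log_q(p)$. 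The main obstacle is bookkeeping the variation $c(s,t)$ correctly — in particular verifying the regularity of $\Log_p$ on $B_\rho(p)$ so that $c$ is genuinely smooth, and correctly identifying the sign and base point of $\partial_s c(1,0)$; the lemmas on the product rule and the symmetry of mixed covariant derivatives do the rest of the heavy lifting.
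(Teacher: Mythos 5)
Your proposal is correct and follows essentially the same route as the paper's own proof in the appendix: the same variation through geodesics $c_p(s,t)=\Exp_p(s\Log_p(\gamma(t)))$, the same use of the product rule and the symmetry of mixed covariant derivatives, the same reduction to boundary terms at $s=0$ and $s=1$, and the same identification $\partial_s c(1,0)=-\Log_q(p)$. (Your sign in the fundamental-theorem-of-calculus step, a difference of the two boundary terms, is in fact the correct one; the vanishing of the $s=0$ term makes this immaterial.)
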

\begin{proof}
	Let $\gamma\colon [0,1]\to B_\rho(p)$ be a geodesic with $\gamma(0) = q, \dot \gamma(0) = v\in T_q\mcM$.
	Consider a variation of geodesics 
	\begin{equation*}
		c_p(s,t) = \Exp_p(s\Log_p(\gamma(t)))
	\end{equation*}
as illustrated in Figure \ref{fig:GeoVar}.
\begin{figure}[tbp]
    \centering
    \includegraphics[width=0.5\textwidth]{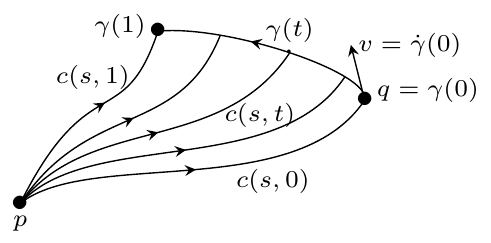}
    \caption{Illustration of the variation through geodesics considered in the proof of Theorem \ref{thm:Karcher1977}.
    }
    \label{fig:GeoVar}
\end{figure}
	As in \cite{Karcher:1977}, we write 
	\begin{equation*}
		c_p'(s,t) = \partial_s c_p(s,t), \quad \dot c_p(s,t) = \partial_t c_p(s,t)
	\end{equation*}
	and observe that 
	\begin{enumerate}[(i)]
	\item $c_p(0,t) = \Exp_p(0) = p$. (All geodesics $s\mapsto c_p(s,t)$ start from $p$).
	\item $c_p(1,t) = \gamma(t)$. (All geodesics $s\mapsto c_p(s,t)$ end at $\gamma(t)$).
		 Moreover,  $\dot \gamma(t) = \dot c_p(1,t)$.
	\item the terminal velocity of the geodesic $s\mapsto c_p(s,t)$ matches minus the velocity of the geodesic
	emanating from $\gamma(t) = c_p(1,t)$ to reach $p$. Hence,
	\begin{equation*}
		c_p'(1,t) = -\Log_{\gamma(t)}(p).
	\end{equation*}
	\item all geodesics are constant-speed curves so that
	\begin{equation*}
		\dist(p,\gamma(t)) =\lVert \Log_p(\gamma(t)\rVert_p =  \lVert c_p'(0,t)\rVert_p = \lVert c_p'(s,t)\rVert_{c_p(s,t)} \text{ for all } s\in[0,1].
	\end{equation*}
	Because of this, we can write 
	\begin{equation*}
		\lVert c'_p(s,t)\rVert^2 = \int_0^1\lVert c'_p(s,t)\rVert^2 \dInt s.
	\end{equation*}
	\item 	When taking covariant derivatives, we may change the order of differentiation according to \Cref{lem:schwarz4codiff}
	\begin{equation*}
		\frac{\mathrm{D}}{\partial_s}\dot c_p(s,t) = \frac{\mathrm{D}}{\partial_t} c_p'(s,t).
	\end{equation*}
	\end{enumerate}
	We calculate for any $v\in T_q\mcM$ the differential as
	\begin{align*}
		\diff{}f_q(v)
		&= \deriv[t]\big\vert_{t=0} f(\gamma(t)) = \frac12 \deriv[t]\big\vert_{t=0}  \dist(\gamma(t),p)^2\\
		&= \frac12 \deriv[t]\big\vert_{t=0}  \int_0^1\lVert c'_p(s,t)\rVert^2 \dInt s\\
		&= \frac12  \int_0^1 \deriv[t]\big\vert_{t=0}  \langle c'_p(s,t), c'_p(s,t)\rangle\dInt s\\
		&= \int_0^1   \langle \frac{\mathrm{D}}{\partial_t} c'_p(s,t), c'_p(s,t)\rangle\big\vert_{t=0}\dInt s\\
		&= \int_0^1 \langle \frac{\mathrm{D}}{\partial_s} \dot c_p(s,t), c'_p(s,t)\rangle\big\vert_{t=0}\dInt s\\
		&= \int_0^1 \left(\langle \frac{\mathrm{D}}{\partial_s} \dot c_p(s,t), c'_p(s,t)\rangle
        + \langle\dot c_p(s,t), \frac{\mathrm{D}}{\partial_s} c'_p(s,t)\rangle \right)\big\vert_{t=0}\dInt s,\\
	\end{align*}
    where the second term vanishes, since $c_p(s,t)$ is a geodesic in $s$ for any fixed $t$. We continue
	\begin{align}
	    \nonumber
		\diff{}f_q(v)
		&= \int_0^1 \deriv[s]  \langle \dot c_p(s,t), c'_p(s,t)\rangle\dInt s\big\vert_{t=0} \\   
	    \nonumber
		&= \left[  \langle \dot c_p(s,t), c'_p(s,t)\rangle   \right]_0^1  \big\vert_{t=0}, \\
		&=   \langle \dot c_p(1,t), c'_p(1,t)\rangle  +  \langle \dot c_p(0,t), c'_p(0,t)\rangle  \big\vert_{t=0},
	\label{eq:var1stOrder_deriv}
	\end{align}
    where again the second summand is zero since $c_p(0,t) = p$ is constant. We finally obtain
	\begin{equation*}
		\diff{}f_q(v)
		=   \langle \dot \gamma(t), -\Log_{\gamma(t)}(p)\rangle \big\vert_{t=0}\\
	\nonumber
		=  \langle v,  -\Log_{q}(p)\rangle
	\end{equation*}
	and hence $\grad_q f = -\Log_{q}(p)\in T_q\mcM$.
\end{proof}
\paragraph{The Hessian of the Riemannian distance function.}
Let $g\colon\mcM \to \R$ be a smooth scalar function. 
Following \cite[Section 6A]{Kuhnel:2015}, it may be considered as a $(0,0)$-tensor.
The covariant derivative of $g$ (in the sense of \cite[Def. 6.2]{Kuhnel:2015})
is a $(0,1)$-tensor $(\nabla g) (X)$ that maps a vector field $X$ to a scalar function.
More precisely, we have
\begin{equation*}
 \nabla g\colon \mathcal{X}(\mcM) \to C^\infty(\mcM),
 X \mapsto \left(p \mapsto (\nabla f(X))(p) = \diff{}g_p(X(p)) = \langle \grad f(p), X(p) \rangle_p\right).
\end{equation*}
The second covariant derivative, $\nabla^2 g$, called the \emph{Hessian} of $g$, 
is obtained by taking the covariant derivative of the $(0,1)$-tensor $(\nabla g)$.
This produces a $(0,2)$-tensor.
A calculation shows 
\begin{equation}
\nonumber
  \nabla^2 f (X,Y) = \langle   \nabla_X \grad f, Y \rangle.
\end{equation}
The Hessian $(1,1)$-tensor associated with $g$ is thus
\begin{equation*}
    \Hess f\colon \mathcal{X}(\mcM) \to \mathcal{X}(\mcM) ,  \quad X \mapsto \nabla_X \grad g.
\end{equation*}

The next lemma provides a path to compute the Hessian operator at a point $p$ and tangent vector $v=X(p)\in T_p\mcM$.
\begin{lemma}[cf. {\cite[Prop. 5.5.4]{AbsilMahonySepulchre:2008}}]
	Let $g\colon\mcM\to \R$ be a smooth scalar function on a 
	Riemannian manifold $\mcM$.
	Let $\gamma$ be a geodesic with $\gamma(0)= q$, $\dot\gamma(0)=v$. 
	Then
	\begin{equation*}
		\langle \Hess g(q)[v]\hspace{0.1cm},\hspace{0.1cm} v\rangle = \frac{\mathrm{d}^2}{\mathrm{d}t^2}\big\vert_{t=0} (g\circ \gamma)(t).
	\end{equation*}
\end{lemma}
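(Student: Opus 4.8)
The plan is to obtain the Hessian by differentiating the scalar function $g\circ\gamma$ twice along the geodesic and reading off the second derivative, using only the metric compatibility of the Levi-Civita connection (via the product rule, Lemma \ref{lem:productrule_codiff_along_c}) and the geodesic equation $\frac{\mathrm{D}}{\mathrm{d}t}\dot\gamma\equiv 0$.

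First I would compute the first derivative. By the defining property of the gradient,
\[
 \frac{\mathrm{d}}{\mathrm{d}t}(g\circ\gamma)(t) = \diff{}g(\gamma(t))[\dot\gamma(t)] = \langle \grad g(\gamma(t)),\ \dot\gamma(t)\rangle_{\gamma(t)},
\]
where $t\mapsto \grad g(\gamma(t))$ and $t\mapsto \dot\gamma(t)$ are smooth vector fields along $\gamma$. Differentiating once more and applying the product rule for the covariant derivative along a curve,
\[
 \frac{\mathrm{d}^2}{\mathrm{d}t^2}(g\circ\gamma)(t) = \Bigl\langle \tfrac{\mathrm{D}}{\mathrm{d}t}\bigl(\grad g\circ\gamma\bigr)(t),\ \dot\gamma(t)\Bigr\rangle_{\gamma(t)} + \Bigl\langle \grad g(\gamma(t)),\ \tfrac{\mathrm{D}}{\mathrm{d}t}\dot\gamma(t)\Bigr\rangle_{\gamma(t)}.
\]
The second term vanishes because $\gamma$ is a geodesic. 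For the first term, since $\grad g\in\mathcal{X}(\mcM)$ is an ambient vector field restricting to $\grad g\circ\gamma$ along $\gamma$, one has $\tfrac{\mathrm{D}}{\mathrm{d}t}(\grad g\circ\gamma)(t) = \nabla_{\dot\gamma(t)}\grad g = \Hess g(\gamma(t))[\dot\gamma(t)]$ by the definition \eqref{eq:Hesse_def} of the Hesse form. Evaluating at $t=0$, where $\gamma(0)=q$ and $\dot\gamma(0)=v$, yields $\frac{\mathrm{d}^2}{\mathrm{d}t^2}\big\vert_{t=0}(g\circ\gamma)(t) = \langle \Hess g(q)[v],\ v\rangle_q$, which is the claim.

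The only point requiring care is the identity $\tfrac{\mathrm{D}}{\mathrm{d}t}(\grad g\circ\gamma)(t) = (\nabla_{\dot\gamma(t)}\grad g)(\gamma(t))$; this is exactly the fact, recalled among the notational conventions, that the covariant derivative along $\gamma$ of the restriction of an ambient vector field $X$ equals $\nabla_{\dot\gamma}X$. It is not a genuine obstacle, merely a matter of invoking it correctly; everything else is the chain rule together with $\nabla$-compatibility of the metric.
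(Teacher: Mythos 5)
Your proof is correct and follows essentially the same route as the paper: differentiate $t\mapsto\langle\grad g(\gamma(t)),\dot\gamma(t)\rangle$ via the product rule for the covariant derivative along $\gamma$, kill the second term with the geodesic equation, and identify $\tfrac{\mathrm{D}}{\mathrm{d}t}(\grad g\circ\gamma)=\nabla_{\dot\gamma}\grad g=\Hess g[\dot\gamma]$. Nothing is missing.
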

\begin{proof}
It holds 
\begin{equation*}
	 \deriv[t] (f\circ \gamma)(t) = \diff{}f_{\gamma(t)}(\dot \gamma(t)) = \langle \grad f(\gamma(t)), \dot \gamma(t) \rangle.
\end{equation*}
Taking the second derivative, we obtain by the rules of covariant differentiation along a curve and 
the product rule of Lemma \ref{lem:productrule_codiff_along_c} that
\begin{align*}
	 \frac{\mathrm{d}^2}{\mathrm{d}t^2} (f\circ \gamma)(t) 
	 &=  \deriv[t] \langle \grad f(\gamma(t)), \dot \gamma(t) \rangle\\
	 &=  \langle  \covDeriv[t] \grad f(\gamma(t)), \dot \gamma(t) \rangle +  \langle \grad f(\gamma(t)),   \covDeriv[t] \dot \gamma(t)\rangle\\
	 &=  \langle  \nabla_{\dot \gamma(t)} \grad f, \dot \gamma(t) \rangle
    \\
	 &= \langle \Hess f(\gamma(t))[\dot\gamma(t)]\hspace{0.1cm},\hspace{0.1cm} \dot\gamma(t)\rangle.
\end{align*}
The third identity holds, because $\covDeriv[t] \dot \gamma(t) = 0$, since $\gamma$ is a geodesic.
At $t=0$, this yields 
\begin{equation} 
\label{eq:Hess_vv}
	\frac{\mathrm{d}^2}{\mathrm{d}t^2} (f\circ \gamma)(0)
	= \langle \Hess f(q)[v]\hspace{0.1cm},\hspace{0.1cm} v\rangle,
\end{equation}
which finishes the proof.
\end{proof}
For computing the Hessian of the distance function $q\mapsto \frac12 \dist(p,q)^2$, we utilize the same variation of geodesics 
$c_p(s,t)$ as in the above proof of Theorem \ref{thm:Karcher1977} 
and continue the calculation from \eqref{eq:var1stOrder_deriv}.
Using that $c_p(1,t)$ is a geodesic, we obtain
\begin{align*}
	 \frac{\mathrm{d}^2}{\mathrm{d}t^2} \frac12 \dist(\gamma(t), p)^2
	 &=  \deriv[t]  \left(\deriv[t]\frac12 \dist(\gamma(t), p)^2\right)
	 \stackrel{\eqref{eq:var1stOrder_deriv}}{=}\deriv[t] \langle \dot c_p(1,t), c_p'(1,t) \rangle\\
	 &=  \langle \covDeriv[t]\dot c_p(1,t), c_p'(1,t) \rangle +  \langle\dot  c_p(1,t) ,   \covDeriv[t]c_p'(1,t) \rangle\\
	 &=  \langle \dot \gamma(t) ,   \covDeriv[s] \dot c_p(1,t) \rangle.
\end{align*}
At $t=0$, we obtain for $\gamma(0) = q$, $\dot \gamma(0) = v$,
\begin{equation}
\label{eq:Hessian_via_var_of_geos}
 \langle \Hess f(q)[v]\hspace{0.1cm},\hspace{0.1cm} v\rangle =
	\frac{\mathrm{d}^2}{\mathrm{d}t^2}\big\vert_{t=0} \dist(\gamma(t), p)  = 
	 \langle \covDeriv[s] \dot c_p(1,0), v \rangle.
\end{equation}
\textbf{Remark}
	Associated with the variation $c_p(s,t) = \Exp_p(s\Log_p(\gamma(t)))$ is the variation field
	\begin{equation*}
   s\mapsto  \dot c_p(s,0) = 	\partial_t\big\vert_{t=0} c_p(s,t)  = \diff{}(\Exp_p)_{(s\Log_p(q)}\left(s\cdot \diff{}(\Log_p)_{q}(v)\right)
	\end{equation*}
	For notational convenience, define $x = \Log_p(q), y = \diff{}(\Log_p)_{q}(v) \in T_p\mcM$.
	The above variation vector field coincides with
	\begin{equation*}
		s\mapsto  \dot c_p(s,0) = 	\partial_t\big\vert_{t=0} c_p(s,t)  =
		\partial_\tau\big\vert_{\tau = 0} \Exp_p(s \cdot (x + \tau y)).
	\end{equation*}
	This is a variation of the geodesic $s\mapsto \hat c_p(s) \coloneqq c_p(s,0)$ from $p$ to $q$ through geodesics, thus a Jacobi field.\footnote{
	More precisely, the unique Jacobi field along the geodesic $c_p(s,0)$ that matches the initial values
	$J(0)  = \partial_\tau\big\vert_{\tau =0} \Exp_p(0\cdot(x+\tau y)) = \partial_\tau\big\vert_{\tau =0} p = 0$
	and
	\begin{align*}
	J'(0)&= \covDeriv[s]\big\vert_{s =0} \partial_\tau\big\vert_{\tau =0} Exp_p(s \cdot (x + \tau y))  
	=  \covDeriv[\tau]\big\vert_{\tau =0} \partial_s\big\vert_{s =0} Exp_p(s \cdot (x + \tau y)) \\
	&= \covDeriv[\tau]\big\vert_{\tau =0}  (x + \tau y) = y = \diff{}(\Log_p)_{q}(v).
	\end{align*}
	}
	Hence, with $s\mapsto J(s) = \partial_\tau\big\vert_{\tau = 0} \Exp_p(s \cdot (x + \tau y)) $, we have
	\begin{equation*}
		J(s)  = \dot c_p(s,0), \quad J'(s) = \covDeriv[s]J(s) =  \covDeriv[s]\dot c_p(s,0).
	\end{equation*}
	Thus, we may write
	\begin{align*}
 \langle \Hess f(q)[v]\hspace{0.1cm},\hspace{0.1cm} v\rangle &=
	 \langle J'(1), v \rangle.
    \end{align*}

\printbibliography

\end{document}